\newcommand{\bdism}{\begin{displaymath}}
\newcommand{\edism}{\end{displaymath}}
\newcommand{\cc}{\mathbb{C}}
\newcommand{\rr}{\mathbb{R}}
\newcommand{\zz}{\mathbb{Z}}
\newcommand{\pp}{\mathbb{P}}
\newcommand{\oo}{\mathcal{O}}
 \DeclareMathOperator{\Chow}{Chow}
\DeclareMathOperator{\Vol}{Vol}
\newtheorem{theorem}{Theorem}[section]
\newtheorem{proposition}[theorem]{Proposition}
\newtheorem{corollary}[theorem]{Corollary}
\newtheorem{lemma}[theorem]{Lemma}
\newtheorem{remark}[theorem]{Remark}
\newtheorem{definition}[theorem]{Definition}
\newtheorem{question}[theorem]{Question}
\address{Department of Mathematics, Princeton University, Princeton NJ 08544-1000,
USA} \email{gdi@math.princeton.edu}
\address{Department of Mathematics, Duke University, Durham NC 27708-0320,
USA} \email{luca@math.duke.edu}
\author{Gabriele Di Cerbo and Luca F. Di Cerbo}
\title{Effective results for complex hyperbolic manifolds}
\begin{document}

\maketitle
\pagestyle{headings}
\begin{abstract}

The goal of this paper is to study the geometry of cusped complex
hyperbolic manifolds through their compactifications. We
characterize toroidal compactifications with non-nef canonical
divisor. We derive effective very ampleness results for toroidal
compactifications of finite volume complex hyperbolic manifolds. We
estimate the number of ends of such manifolds in terms of their
volume. We give effective bounds on the number of complex hyperbolic
manifolds with given upper bounds on the volume. Moreover, we give
two sided bounds on their Picard numbers in terms of the volume and
the number of cusps.

\end{abstract}

\tableofcontents

\section{Introduction}
\pagenumbering{arabic}

Let $\mathcal{H}^{n}$ be the complex hyperbolic
space of dimension $n\geq2$, that is, the K\"ahler space form with
holomorphic sectional curvature $-1$. Let $X^{o}$ be a complete
non-compact complex hyperbolic manifold of finite volume. Such
manifold is obtained as $X^{o}:=\mathcal{H}^{n}/\Gamma$ where
$\Gamma$ is torsion-free lattice of $\textrm{PU}(1,n)$ with
parabolic elements which is non-uniform. Then $X^{o}$ has finitely
many disjoint unbounded ends of finite volume, the \emph{cusps} of
$X^{o}$.

Baily-Borel \cite{Borel1} and Siu-Yau \cite{Siu} proved that we can
find a compactification $X^{*}$ of $X^{o}$ such that the complement
of $X^{o}$ in $X^{*}$ consists of only finitely many (singular)
points, called cusp points. Furthermore, under some mild assumptions
on the lattice $\Gamma$, there exists a smooth variety $X$, which is
a resolution of $X^{*}$, such that each exceptional divisor over a
cusp point is a smooth abelian variety. For the precise requirements
on $\Gamma$ we refer the interested reader to Section
\ref{preliminaries}. Let us denote by $D=\coprod_{i} D_{i}$ the union
of the exceptional divisors on $X$. We will refer to the pair
$(X,D)$ as a toroidal compactification of $X^{o}$. The dimension of
$X$ will always be denoted by $n$ and assumed to be greater or equal
than two. Such toroidal compactifications were first constructed by
Mumford et al. \cite{Ash} and by Mok \cite{Mok}, for more details
see again Section \ref{preliminaries}.

The goal of this work is to understand how the positivity properties
of the log-canonical divisor $K_{X}+D$ determine the geometry of
$X^{o}$ and $(X, D)$. The first technical result of this paper is
that $K_{X}+D$ is not only big and nef but it is a limit of ample
divisors of the form $K_{X}+\alpha D$ where $\alpha$ is a real
number less than one. Moreover, there exists a uniform bound on the
ampleness range independent of the dimension. Remarkably, the
existence of such a uniform bound is crucial for all the geometric
applications presented in this work.

\begin{theorem}\label{Mok}
Let $(X,D)$ be a toroidal compactification. Then $K_{X}+\alpha D$ is
ample for all $\alpha\in\left(\frac{1}{3}, 1\right)$.
\end{theorem}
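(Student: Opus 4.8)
The plan is to deduce ampleness from two facts: the behaviour of $K_X+\alpha D$ along the boundary, where everything is governed by adjunction, and the nefness of the single ``critical'' divisor $K_X+\tfrac13 D$, which carries all the real content. First I record the boundary computation. Each $D_i$ is a smooth abelian variety, so $K_{D_i}=\oo_{D_i}$, and since the $D_i$ are pairwise disjoint, adjunction gives $(K_X+D)|_{D_i}=(K_X+D_i)|_{D_i}=K_{D_i}=0$; hence $K_X|_{D_i}=-N_{D_i/X}$. By construction of the toroidal compactification the normal bundle $N_{D_i/X}=\oo_X(D_i)|_{D_i}$ is anti-ample (this is what allows $D_i$ to be contracted to the cusp point), so $K_X|_{D_i}$ is ample and, for every $\alpha<1$,
\[(K_X+\alpha D)|_{D_i}=-(1-\alpha)\,N_{D_i/X}\]
is ample on $D_i$ as well. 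In particular $(3K_X+D)\cdot C=-2\,N_{D_i/X}\cdot C>0$ for every curve $C\subset D_i$.

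Next, recall that $K_X+D$ is nef and big (it is, up to scale, the class of the complete K\"ahler--Einstein metric of $X^o$, by Siu--Yau and Mok), and that it equals $f^{*}H$ for an ample class $H$ under the contraction $f\colon X\to X^{*}$ to the Baily--Borel compactification, which contracts precisely the divisors $D_i$ to the cusp points. Writing $K_X+\alpha D=(K_X+D)-(1-\alpha)D$ and testing against a curve $C\not\subset D$ (so $D\cdot C\ge0$), nefness of $K_X+\alpha D$ on such curves is equivalent to $\alpha\ge 1-\frac{(K_X+D)\cdot C}{D\cdot C}$ whenever $D\cdot C>0$. Thus the optimal threshold is $1-\inf_C\frac{(K_X+D)\cdot C}{D\cdot C}$, and the assertion $\alpha>\tfrac13$ is exactly the inequality
\[\frac{(K_X+D)\cdot C}{D\cdot C}\ \ge\ \frac23,\qquad\text{i.e.}\qquad (3K_X+D)\cdot C\ \ge\ 0\]
for every curve $C\not\subset D$; equivalently, $K_X+\tfrac13 D$ is nef. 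This inequality is the crux of the theorem and the step I expect to be the main obstacle: it must be proved by a local analysis at the cusps, using the explicit toroidal model of Ash--Mumford--Rapoport--Tai and Mok together with the asymptotics of the complex hyperbolic metric near $D$. The worst curves are those that run deep into a cusp, and controlling the ratio of their hyperbolic area $(K_X+D)\cdot C$ to their intersection number $D\cdot C$ with the boundary is precisely what forces the constant $\tfrac23$; the universal local structure of a complex hyperbolic cusp is what makes the resulting bound, and hence the range $(\tfrac13,1)$, independent of the dimension $n$.

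Granting this, ampleness for $\alpha\in(\tfrac13,1)$ follows by interpolation and Kleiman's criterion. For such $\alpha$ set $s=\tfrac{3\alpha-1}{2}\in(0,1)$, so that $K_X+\alpha D=(1-s)\,(K_X+\tfrac13 D)+s\,(K_X+D)$ is a convex combination of two nef divisors and is therefore nef. To upgrade to ampleness I check strict positivity on $\overline{NE}(X)\setminus\{0\}$. If $\gamma\in\overline{NE}(X)$ satisfies $(K_X+D)\cdot\gamma>0$, then $(K_X+\alpha D)\cdot\gamma\ge s\,(K_X+D)\cdot\gamma>0$. Otherwise $(K_X+D)\cdot\gamma=0$, so $\gamma$ lies on the face of $\overline{NE}(X)$ contracted by $f$; this face is generated by classes of curves contained in the $D_i$, on which $K_X+\tfrac13 D$ restricts to the ample class $\tfrac23(-N_{D_i/X})$, whence $(K_X+\tfrac13 D)\cdot\gamma>0$ and $(K_X+\alpha D)\cdot\gamma\ge(1-s)\,(K_X+\tfrac13 D)\cdot\gamma>0$. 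In either case $K_X+\alpha D$ is positive on $\overline{NE}(X)\setminus\{0\}$, so it is ample by Kleiman's criterion, completing the proof.
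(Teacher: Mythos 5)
Your boundary computation, the reduction of the statement to the nefness of $K_{X}+\tfrac{1}{3}D$ (equivalently $(3K_{X}+D)\cdot C\geq 0$ for every curve $C\not\subset D$), and the final interpolation/Kleiman step are all sound, and they correctly isolate where the content lies. But the proof has a genuine gap: that isolated step is never proved. You write that it ``must be proved by a local analysis at the cusps'' using the toroidal model and the asymptotics of the Bergman metric, which is a placeholder, not an argument --- and it is in fact the entire theorem, since ampleness of $K_{X}+\alpha D$ for $\alpha$ near $1$ is comparatively soft (it follows from Theorem \ref{tian}, with threshold $\tfrac{n+1}{n+2}$ depending on $n$), while the dimension-free constant $\tfrac{1}{3}$ is exactly what is at stake. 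Moreover, the route you hint at is unlikely to close the gap: even granting that $(K_{X}+D)\cdot C$ is proportional to the hyperbolic area of $C\setminus D$ and that a Gauss--Bonnet/Ahlfors--Schwarz bound controls area per puncture, the quantity $D\cdot C$ counts intersections \emph{with multiplicity}, so a curve highly tangent to $D$ can have $D\cdot C$ much larger than its number of punctures, and no per-puncture area estimate bounds the ratio $(K_{X}+D)\cdot C/(D\cdot C)$ from below uniformly. A related soft spot: your ``optimal threshold'' is an infimum over all curves, and nothing in your argument lets you test it curve by curve.

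The paper's proof gets the constant $\tfrac13$ by an entirely algebro-geometric mechanism that reduces the infimum to finitely many extremal rays via the Cone Theorem: by Lemma \ref{fiberdim} (nefness of $K_{X}+D$ plus anti-ampleness of the normal bundles $N_{D_{i}/X}$ forces every irreducible component $F$ of a nontrivial fiber of an extremal contraction to satisfy $\dim F\leq 1$), Wi\'sniewski's inequality $\dim F\geq l(R)-1$ (Theorem \ref{Cinzia}), and his classification of contractions with one-dimensional fibers (Theorem \ref{Wisn}), one reduces to excluding a conic-bundle structure, which is done by bend-and-break on the Baily--Borel contraction $X^{*}$ together with hyperbolicity; this yields $l(R)\leq 1$ (Theorem \ref{raggi}). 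Since any rational curve in $X$ must meet $D$ in at least three points ($\pp^{1}$ minus at most two points is not hyperbolic), each extremal rational curve $C_{i}$ satisfies $-K_{X}\cdot C_{i}\leq 1$ and $D\cdot C_{i}\geq 3$, hence $(K_{X}+\alpha D)\cdot C_{i}\geq 3\alpha-1>0$ for $\alpha>\tfrac13$, and running the proof of Theorem \ref{tian} with this improved length bound gives ampleness on the whole range $\left(\tfrac13,1\right)$. To repair your proposal you would need to supply precisely this extremal-ray analysis (or an equivalent substitute); as written, the crucial inequality is assumed rather than established.
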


The proof of Theorem \ref{Mok} is based on the techniques developed
by the authors in \cite{DiCerbo}. We list here some of the main
applications. We refer to Section \ref{maggiore} for details.

The first bi-product of the proof of Theorem \ref{Mok} is a theorem
characterizing toroidal compactifications with non-nef canonical
divisor. Note that toroidal compactifications of ball quotients with
non-nef canonical divisor do exist, see the paper by Hirzebruch
\cite{Hirzebruch}. The surfaces constructed in \cite{Hirzebruch} are
blow ups of a particular Abelian surface at certain configurations
of points. These are the only currently known examples and
unfortunately they are all in dimension two! We hope Theorem
\ref{structure1} below will help in the construction of higher
dimensional examples and shed some light on the problem of
determining how common or rare these examples are.

\begin{theorem}\label{structure1}
Let $(X,D)$ be a toroidal compactification such that $K_{X}$ is not
nef. Then $X$ is the blow-up of a smooth variety $Y$ along a smooth
subvariety $Z$ of codimension $2$.
\end{theorem}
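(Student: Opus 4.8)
The plan is to produce the blow-down morphism as the contraction of a single $K_X$-negative extremal ray, and then to identify it using the geometry of the boundary. Since $K_X$ is not nef, the Cone Theorem provides a $K_X$-negative extremal ray $R$, generated by a rational curve $C$, with contraction $\pi\colon X\to Y$. Two restrictions of Theorem \ref{Mok} drive everything. First, because $K_X+\tfrac13 D$ is nef (a limit of the ample classes $K_X+\alpha D$ as $\alpha\to\tfrac13^{+}$), we get $\tfrac13\,D\cdot C\ge -K_X\cdot C>0$, so $C$ meets $D$ and $D\cdot C\ge 3$; in particular $D\cdot R>0$, hence $D$ is $\pi$-ample. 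Second, restricting the ample class $K_X+\alpha D$ to a component $D_i$ and using adjunction $(K_X+D)|_{D_i}=K_{D_i}=0$ (the $D_i$ are abelian and pairwise disjoint) gives $(K_X+\alpha D)|_{D_i}=(\alpha-1)N_i$ with $N_i=D_i|_{D_i}$; ampleness for $\alpha<1$ forces $-N_i$ to be ample, so $K_X|_{D_i}=-N_i$ is ample. Thus every curve inside $D_i$ is $K_X$-positive, none lies in $R$, and $\pi|_{D_i}$ is finite onto its image.

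Next I would bound the fiber dimension. Let $F$ be any positive-dimensional fiber of $\pi$; it is covered by curves of $R$, none of which lies in $D$ (abelian varieties contain no rational curves), so $F\not\subset D$ and $\operatorname{Supp}(D|_F)=D\cap F=\coprod_i(D_i\cap F)$. Since each $\pi|_{D_i}$ is finite, $D\cap F$ is a finite set, while $D$ is $\pi$-ample, so $D|_F$ is an ample divisor on $F$. An ample divisor on a variety of dimension $\ge 2$ cannot have finite support, so $\dim F\le 1$. Hence $\pi$ is an extremal contraction all of whose fibers have dimension at most one. By the classification of such contractions from a smooth projective variety (Ando), $Y$ is smooth and $\pi$ is either a conic bundle (if $\pi$ is of fiber type) or the blow-up of $Y$ along a smooth subvariety $Z$ of codimension two (if $\pi$ is divisorial). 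In the divisorial case we are done.

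It therefore remains to exclude the conic bundle case, and this I expect to be the main obstacle. If $\pi$ were a conic bundle, each $D_i$ would be an abelian variety mapping finitely and surjectively onto $Y$, so $Y$ would be dominated by an abelian variety. Applying Riemann--Hurwitz to $g=\pi|_{D_i}\colon D_i\to Y$ and using $K_{D_i}=0$ gives $g^{*}(-K_Y)=R_g\ge 0$, the ramification divisor; pushing forward, $(\deg g)(-K_Y)=g_{*}R_g$ is effective, so $-K_Y$ is $\mathbb{Q}$-effective and $Y$ is not of general type. This must be played against the fact that $K_X+D$ is big. Writing $K_X+D=(K_{X/Y}+D)+\pi^{*}K_Y$ and noting that $K_{X/Y}+D$ is $\pi$-ample of positive relative degree $D\cdot F-2$, the sheaves $\pi_{*}\mathcal{O}_X(m(K_{X/Y}+D))$ are weakly positive by Viehweg's semipositivity, whereas the twist by $mK_Y$ introduces the non-positivity of $K_Y$; a standard weak-positivity estimate should then yield $\kappa(X,K_X+D)\le \dim Y<n$, contradicting bigness. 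I expect the delicate point to be making this last comparison rigorous, i.e.\ showing that a conic-bundle structure with horizontal boundary is genuinely incompatible with the log-general-type nature of $(X,D)$; once fiber type is excluded, $\pi$ is the asserted smooth codimension-two blow-up.
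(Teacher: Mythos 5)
Your reduction is sound and, up to packaging, is exactly the paper's: your fiber-dimension bound (every curve in a fiber is $D$-positive, while curves inside the abelian components $D_i$ are $D$-negative by anti-ampleness of the normal bundle, so $\dim(D\cap F)=0$ and $\dim F\le 1$) is the paper's Lemma \ref{fiberdim}, and the dichotomy conic bundle versus smooth codimension-two blow-up is Wi\'sniewski's classification (Theorem \ref{Wisn}; Ando's theorem is an acceptable substitute). One logical caveat: you invoke Theorem \ref{Mok} (nefness of $K_X+\tfrac13 D$, ampleness of $K_X+\alpha D$) as input, but in the paper Theorem \ref{Mok} is proved \emph{using} Theorem \ref{raggi}, of which the present statement is a corollary; this circularity is repairable, since everything you actually need has an independent source --- nefness of $K_X+D$ comes from the K\"ahler--Einstein current, anti-ampleness of $N_{D_i}$ is Theorem 1 of Mok, and $D\cdot C\ge 3$ for rational curves follows from hyperbolicity of $X\setminus D$ (a rational curve meeting $D$ in at most two points would give a once- or twice-punctured $\pp^1$ in a negatively curved manifold).

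The genuine gap is the one you flag yourself: the exclusion of the conic-bundle case, which is the heart of the paper's proof, and the mechanism you propose for it does not work. There is no weak-positivity estimate yielding $\kappa(X,K_X+D)\le\dim Y$ from a conic-bundle structure with $-K_Y$ $\qq$-effective: take $X=\pp^1\times\pp^1\to Y=\pp^1$ the second projection and $D$ a smooth curve of bidegree $(3,b)$ with $b\ge 3$. Then $\pi$ is a conic bundle, $D$ is horizontal with $D\cdot F=3$, $K_{X/Y}+D$ is relatively ample, the sheaves $\pi_*\oo_X(m(K_{X/Y}+D))$ are sums of ample line bundles (in particular weakly positive), and $-K_Y$ is effective --- yet $K_X+D$ has bidegree $(1,b-2)$ and is ample, hence big. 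So every ingredient your final step uses is present and the conclusion fails: log general type is perfectly compatible with a conic-bundle structure with horizontal boundary, and your Riemann--Hurwitz observation that $Y$ is dominated by an abelian variety cannot close the argument by itself (that extra hypothesis, which my example does not satisfy, is precisely what you would have to exploit, and your sketch never does). The paper's actual argument is quite different: for $n\ge 3$ it passes to the Baily--Borel compactification $X^*$, where the components of $D$ collapse to points, so that $D\cdot F\ge 3$ makes the conic fibers a family of rational curves through at least three fixed points; bend-and-break then forces repeated splitting and produces a fiber with three irreducible components, impossible for a conic in $\pp^2$. For $n=2$ it shows $\rho(X)=2$, classifies the resulting minimal ruled surfaces over a rational or elliptic base, and rules them out because $D$ would have to be the unique negative section, yielding a once-punctured $\pp^1$ in the hyperbolic $X\setminus D$. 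As it stands, the fiber-type case remains open in your proposal.
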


Theorem \ref{structure1} follows from a classification result of
extremal rays of toroidal compactifications. In particular, these
varieties appear to be very simple from the minimal model point of
view. This is quite unusual in higher dimension.

Next, Theorem \ref{Mok} can successfully be applied to study the
birational geometry of the log-canonical divisor of the pair $(X,
D)$.

\begin{theorem}\label{Mok1}
Let $(X,D)$ be a toroidal compactification. Then for all $m\geq
(2n+2)^{3}$, the map associated to $|m(K_{X}+D)|$ is a morphism and
it defines an embedding of $X\backslash D$ into some projective
space. Furthermore, this morphism maps all the different components
of $D$ to distinct singular points.
\end{theorem}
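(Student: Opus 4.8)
The plan is to produce the morphism associated to $m(K_X+D)$ in three stages: first show that $m(K_X+D)$ is globally generated for $m\geq(2n+2)^{3}$, then that the resulting morphism $\phi_m$ contracts each boundary component while embedding the open part, and finally that the boundary components go to distinct singular points. Two structural facts drive everything. Since the $D_i$ are pairwise disjoint smooth divisors, $(X,D)$ is log smooth, so $(X,\alpha D)$ is klt for $\alpha<1$ and $(X,D)$ is log canonical; by Theorem~\ref{Mok}, $K_X+D$ is nef (a limit of the ample $K_X+\alpha D$) and big (it is $(K_X+\tfrac12 D)+\tfrac12 D$, ample plus effective). The second fact is a restriction computation: because the $D_j$ with $j\neq i$ avoid $D_i$, adjunction gives
\[
(K_X+D)|_{D_i}=(K_X+D_i)|_{D_i}=K_{D_i}=\mathcal{O}_{D_i},
\]
the last equality because $D_i$ is an abelian variety. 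Hence $m(K_X+D)|_{D_i}$ is trivial for every $m$, so once the system is base point free every section is constant along $D_i$ and $\phi_m$ contracts each $D_i$ to a point.

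For base point freeness I would run the log canonical base point free theorem on the pair $(X,D)$ together with Koll\'ar's effective freeness method. Working with the lc pair is what makes the hypotheses hold: for $L=m(K_X+D)$ the difference $L-(K_X+D)=(m-1)(K_X+D)$ is nef and big, whereas the same difference against the klt boundary $\alpha D$ would be $(m-1)(K_X+D)+(1-\alpha)D$, which is negative on curves inside $D$. This is the crux and the main obstacle: $K_X+D$ is nef and big but not ample, being trivial exactly along the boundary, so the standard klt effective freeness statements do not apply verbatim and one is forced into the delicate log canonical regime. I would resolve this by separating the two loci --- along each $D_i$ the restriction is the already globally generated $\mathcal{O}_{D_i}$, and on $X\setminus D$ the class is governed by the ample $K_X+\alpha D$ --- and by feeding the dimension-independent ampleness range of Theorem~\ref{Mok} into the effective estimates; the uniformity of the $\tfrac13$ threshold is precisely what yields a bound of the shape $(2n+2)^{3}$ with no hidden dependence on $n$.

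To upgrade global generation to an embedding on $X\setminus D$ I would show that $\phi_m$ separates points and tangent directions there, by the usual effective very ampleness machinery: for two points of $X\setminus D$ one constructs an auxiliary $\mathbb{Q}$-divisor with an isolated non-klt centre at the chosen points and applies Nadel/Kawamata--Viehweg vanishing to lift the separating sections of $m(K_X+D)$, the required positivity being supplied by the ampleness of $K_X+\alpha D$ away from $D$. The same separation, applied to a point of $D_i$ against a point of $D_j$ or of $X\setminus D$, shows that distinct components of $D$ have distinct images and that these images are disjoint from $\phi_m(X\setminus D)$; here the disjointness of the $D_i$ makes the construction of the auxiliary divisor routine. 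Making all of this effective with the single cubic bound uniformly in $n$ is the second place where Theorem~\ref{Mok} is essential.

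Finally, for the singularity statement I would identify $\phi_m$ as the contraction to the Baily--Borel compactification $X^{*}$ and compute discrepancies. Since $D$ is $\phi_m$-exceptional, $\phi_{m\,*}D=0$ and $\phi_{m\,*}(K_X+D)=K_{X^{*}}$, so from $K_X+D=\phi_m^{*}(\tfrac1m\mathcal{O}(1))$ one gets
\[
K_X+D=\phi_m^{*}K_{X^{*}},
\]
i.e.\ each $D_i$ is a log canonical centre of discrepancy $-1$ over $X^{*}$. As a smooth point of an $n$-fold with $n\geq2$ is terminal and carries only positive discrepancies, the image point $p_i$ of $D_i$ cannot be smooth, which gives the claimed singularity. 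I expect the genuine difficulty to lie entirely in the effective step: extracting the explicit bound $(2n+2)^{3}$ in the log canonical setting, rather than the qualitative picture, which is classical.
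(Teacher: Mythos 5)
Your skeleton matches the paper's at a coarse level---triviality of $(K_X+D)|_{D_i}$ forcing the contraction of each abelian component, the uniform $\alpha>\frac{1}{3}$ range of Theorem \ref{Mok} driving the dimension-independent constant, and a vanishing theorem separating boundary components---but the engine you propose for the central step fails as stated. To separate two points of $X\setminus D$ by Nadel/Kawamata--Viehweg vanishing applied to $m(K_X+D)$, you need the twist $m(K_X+D)-K_X-\Delta$ to be nef and big for your auxiliary divisor $\Delta\sim_{\qq}\lambda(K_X+\alpha D)$; this twist equals $(m-1-\lambda)K_X+(m-\lambda\alpha)D$, which has strictly negative degree on every curve $C\subseteq D_i$, since $(K_X+D)\cdot C=0$ and $D\cdot C<0$ by anti-ampleness of the normal bundle of $D_i$. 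So the twist is big but never nef, and ``positivity supplied by the ampleness of $K_X+\alpha D$ away from $D$'' is not a hypothesis under which Nadel or Kawamata--Viehweg vanishing operates: the obstruction sits exactly on $D$, and your ``separation of the two loci'' never resolves it. The missing idea is the paper's twist-and-inject maneuver (Theorem \ref{veryample}): one never applies vanishing to $K_X+D$ itself. Theorem \ref{Mok} makes $H:=2K_X+D$ an ample integral divisor; Angehrn--Siu \cite{Ang} gives $B:=K_X+(n^2+1)H$ ample and base point free; Castelnuovo--Mumford regularity (Example 1.8.23 in \cite{Laz1}) makes $K_X+(n+2)B=M(K_X+D)-kD$ \emph{honestly} very ample for explicit $M\leq 2(n+1)^3$ and $k>0$; and multiplication by the canonical section of $\oo_X(kD)$ injects $H^0(X,\oo_X(M(K_X+D)-kD))\hookrightarrow H^0(X,\oo_X(M(K_X+D)))$. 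Since the added section vanishes only along $D$, the image subsystem still embeds $X\setminus D$, which is exactly ``very ample modulo $D$'' with the stated cubic bound.

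Two smaller points. First, triviality of $m(K_X+D)|_{D_i}$ alone does not give base point freeness along $D$ nor distinct images for distinct components: you need the restriction map to be surjective, i.e.\ sections must lift. The paper gets this (Proposition \ref{exact}) from the exact sequence $0\to\oo_X(2K_X+D)\to\oo_X(2(K_X+D))\to\oo_D\to 0$ together with $H^1(X,\oo_X(2K_X+D))=0$, which is legitimate Kawamata--Viehweg since $2K_X+D=K_X+(K_X+D)$ with $K_X+D$ big and nef---no nefness problem arises here because one restricts to $D$ rather than twisting against it, so your appeal to a point-separation construction on $D$ (where $K_X+D$ has no local positivity at all) should be replaced by this elementary sequence. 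Second, your discrepancy argument for the singularity of the image points---$K_X+D=\phi_m^{*}K_{X^{*}}$, so $D_i$ has discrepancy $-1$ over $p_i$, impossible over a smooth (hence terminal) point when $n\geq 2$---is sound modulo normality of the image (and if the image is non-normal at $p_i$ it is singular there anyway); this is in fact more explicit than the paper, which leaves the singularity claim to the identification of $\phi_m(X)$ with the Baily--Borel compactification $X^{*}$.
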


Theorem \ref{Mok1} is an effective version of a recent result of
Mok, see Main Theorem in \cite{Mok}. Nevertheless, our proof relies
on completely different techniques than Mok's. Note that when $(X,
D)$ is a toroidal compactification associated to a neat arithmetic
lattice in $\textrm{PU}(1,n)$, Theorem \ref{Mok1} gives a very
concrete realization of the classical Baily-Borel compactification
$X^{*}$ as a projective algebraic variety.

Theorem \ref{Mok} can also be applied to study the geometry of the
smooth variety $X$ in the pair $(X, D)$.

\begin{theorem}\label{degree1}
Let $(X,D)$ be a toroidal compactification. Then $X$ can be realized
as a smooth subvariety of $\pp^{2n+1}$ such that its degree $d$
satisfies \bdism d\leq (2n+2)^{3n}(K_{X}+D)^{n}. \edism
\end{theorem}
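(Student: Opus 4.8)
The plan is to embed $X$ by a suitable very ample multiple of an ample divisor numerically close to $K_X+D$, read off the degree as a top self-intersection, and then cut the ambient dimension down to $2n+1$ by generic linear projection.

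First I would fix $\alpha\in\left(\frac{1}{3},1\right)$ and set $L:=m(K_X+\alpha D)$ with $m=(2n+2)^{3}$. By Theorem \ref{Mok} the divisor $K_X+\alpha D$ is ample, and the effective very ampleness analysis underlying Theorem \ref{Mok1} shows that for this value of $m$ the bundle $L$ is very ample on all of $X$. In contrast to $m(K_X+D)$, the strict inequality $\alpha<1$ is what prevents the components of $D$ from being contracted, so that one obtains an honest closed embedding of the smooth variety $X$ rather than of $X\setminus D$. This realizes $X$ as a smooth subvariety of $\pp^{N}$ with $N=h^{0}(X,L)-1$, whose degree is the top self-intersection $d=L^{n}=m^{n}(K_X+\alpha D)^{n}$.

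Second, I would bound $(K_X+\alpha D)^{n}\le (K_X+D)^{n}$. Writing $K_X+\alpha D=(K_X+D)-(1-\alpha)D$ and using that $K_X+D$ is nef (it is a limit of the ample divisors $K_X+\alpha D$), that $K_X+\alpha D$ is ample, and that $D$ is effective, a telescoping computation gives $(K_X+\alpha D)^{n}\le (K_X+\alpha D)^{n-1}(K_X+D)\le\cdots\le (K_X+D)^{n}$. Each step discards the term $(1-\alpha)(K_X+\alpha D)^{i}(K_X+D)^{j}\cdot D$, which is nonnegative because its restriction to $D$ is a product of an ample and a nef class on $D$. Combined with the first step this yields $d\le (2n+2)^{3n}(K_X+D)^{n}$.

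Finally, to descend to $\pp^{2n+1}$ I would use generic projection. If $N>2n+1$, the secant variety of $X$ in $\pp^{N}$ has dimension at most $2n+1<N$, so a general point $p\notin X$ lies off all secant and tangent lines of $X$; projection from $p$ is then an isomorphism onto its image, preserving both smoothness and the degree $d=L^{n}$. Iterating until the ambient space is $\pp^{2n+1}$ (and embedding linearly in the degenerate case $N<2n+1$) gives the claimed realization with the same degree bound. The main obstacle is the first step: pinning down the effective very ampleness of $m(K_X+\alpha D)$ with the sharp multiple $m=(2n+2)^{3}$ for some $\alpha<1$, which is precisely where the uniform ampleness range of Theorem \ref{Mok} and the methods behind Theorem \ref{Mok1} are used; the projection argument is classical and the intersection-theoretic inequality is routine.
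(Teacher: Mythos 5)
Your proposal is correct and follows essentially the same route as the paper: the proof there takes the genuinely very ample divisor produced inside Theorem \ref{veryample}, namely $A=M(K_X+D)-kD=M\left(K_X+\beta D\right)$ with $\beta\in\left(\frac{1}{3},\frac{1}{2}\right)$ and $M\leq 2(n+1)^{3}$, embeds $X$ by it with degree at most $M^{n}(K_X+D)^{n}\leq (2n+2)^{3n}(K_X+D)^{n}$, and finishes by general projection into $\pp^{2n+1}$ --- precisely your three steps, with your telescoping inequality being the step the paper leaves implicit (it is even easier than you say, since $(K_X+D)|_{D_i}=\oo_{D_i}$ makes most cross terms vanish outright). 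The only slip is your insistence on the exact multiple $m=(2n+2)^{3}$: the effective analysis yields very ampleness of $M(K_X+\alpha D)$ for a specific $M\leq (2n+2)^{3}$ rather than for that exact value, which is all the degree bound requires.
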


A similar result is obtained by Hwang in \cite{Hwa2}. The main
advantage of our approach is that it provides bounds which are
linear in $(K_{X}+D)^{n}$. The bounds obtained by Hwang are
polynomial of degree $n+1$ in $(K_{X}+D)^{n}$.

Theorem \ref{degree1} has several applications. For example,
combining this result with classical Chow variety techniques, we can
derive explicit bounds for the number of complex hyperbolic
manifolds with given upper bounds on the volume. These results are
effective versions of the classical Wang's finiteness theorem, see
\cite{Wan}. For the numerical values of these bounds see Corollaries
\ref{chow} and \ref{pairs}.

As a final application of Theorem \ref{Mok}, we show how to bound
the number of cusps of a finite volume complex hyperbolic manifold
in terms of its volume.

\begin{theorem}\label{3}
Let $(X,D)$ be a toroidal compactification. Let $q$ be the number of
cusps of $X^{o}$. Then
\begin{align}\notag
q\leq \left(\frac{3}{2}\right)^{n}(K_{X}+D)^{n}.
\end{align}
\end{theorem}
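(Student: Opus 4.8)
The plan is to exploit the disjointness of the boundary components together with the ampleness range furnished by Theorem \ref{Mok}. Write $D=\coprod_{i=1}^{q}D_{i}$, so that $q$ is exactly the number of boundary components, each $D_{i}$ being a smooth abelian variety of dimension $n-1$. Since the $D_{i}$ are pairwise disjoint, adjunction gives $(K_{X}+D)|_{D_{i}}=(K_{X}+D_{i})|_{D_{i}}=K_{D_{i}}\cong\oo_{D_{i}}$; that is, $K_{X}+D$ restricts trivially to each $D_{i}$. Setting $A_{i}:=-D_{i}|_{D_{i}}$, this forces $K_{X}|_{D_{i}}=A_{i}$. Restricting the ample class $K_{X}+\alpha D$ to $D_{i}$, and using $D|_{D_{i}}=D_{i}|_{D_{i}}$, yields $(\alpha-1)D_{i}|_{D_{i}}=(1-\alpha)A_{i}$, which must be ample for every $\alpha\in(\tfrac13,1)$. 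Hence $A_{i}$ is an ample class on the projective variety $D_{i}$, and the integer $a_{i}:=A_{i}^{\,n-1}$ satisfies $a_{i}\geq 1$.

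Next I would introduce $N:=K_{X}+\tfrac13 D$, which by Theorem \ref{Mok} is a limit of ample divisors as $\alpha\to\tfrac13^{+}$ and therefore nef, so that $N^{n}\geq 0$. The core computation is to expand $(K_{X}+D)^{n}=(N+\tfrac23 D)^{n}$ by the binomial theorem. Disjointness kills all mixed terms $D_{i}\cdot D_{j}$ with $i\neq j$, so $D^{k}=\sum_{i}D_{i}^{k}$, and for $k\geq 1$ the restriction formulas $N|_{D_{i}}=\tfrac23 A_{i}$ and $D_{i}|_{D_{i}}=-A_{i}$ give $N^{n-k}\cdot D_{i}^{k}=(\tfrac23)^{n-k}(-1)^{k-1}a_{i}$. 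Summing over $i$ and over $k$, and invoking the elementary identity $\sum_{k=1}^{n}\binom{n}{k}(-1)^{k-1}=1$, the whole boundary contribution collapses to exactly $(\tfrac23)^{n}\sum_{i}a_{i}$, producing the clean identity $(K_{X}+D)^{n}=N^{n}+(\tfrac23)^{n}\sum_{i=1}^{q}a_{i}$.

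Finally, since $N^{n}\geq 0$ and $a_{i}\geq 1$ for every $i$, I conclude $(K_{X}+D)^{n}\geq(\tfrac23)^{n}q$, which is equivalent to the asserted bound $q\leq(\tfrac32)^{n}(K_{X}+D)^{n}$. The main obstacle is the bookkeeping in the middle step: one must check that the normal-bundle restrictions are precisely as stated and that the alternating binomial sum telescopes exactly to $1$, so that the $N^{n}$ term and the boundary term separate additively with no leftover error. The two geometric inputs driving the argument — the vanishing of $(K_{X}+D)|_{D_{i}}$ and the ampleness of $-D_{i}|_{D_{i}}$ — follow at once from the abelian-variety structure of the cusps and from Theorem \ref{Mok}, so the entire proof reduces to this single intersection-theoretic identity.
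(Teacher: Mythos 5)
Your proof is correct and is essentially the paper's own argument in different packaging: your nef divisor $N=K_{X}+\tfrac{1}{3}D$ is $\tfrac{1}{3}(3L-2D)$, your identity $(K_{X}+D)^{n}=N^{n}+\left(\tfrac{2}{3}\right)^{n}\sum_{i}a_{i}$ is the paper's expansion $(3L-2D)^{n}=3^{n}L^{n}+(-1)^{n}2^{n}D^{n}\geq 0$ after rescaling, and your bound $a_{i}=(-1)^{n-1}D_{i}^{n}\geq 1$ is exactly what the paper extracts via $q\leq D\cdot(2L-D)^{n-1}=(-1)^{n-1}D^{n}$ using the ample divisor $2L-D=2\left(K_{X}+\tfrac{1}{2}D\right)$. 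Both arguments rest on the same two inputs from Theorem \ref{Mok} --- nefness of $K_{X}+\tfrac{1}{3}D$ and ampleness of $-D_{i}|_{D_{i}}$ obtained by restricting $K_{X}+\alpha D$ to the boundary --- so your proposal matches the paper's proof in substance.
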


The problem of bounding the number of ends in pinched negatively
curved finite volume manifolds is a long standing problem in
differential geometry. For complex hyperbolic manifolds, the
classical approach is via techniques coming from geometric topology,
see \cite{Parker} and the bibliography therein. More recently, Hwang
in \cite{Hwa1} was able to improve Parker's result by using the
Hirzebruch-Mumford proportionality principle \cite{Mumford}. The
bound we present here in Theorem \ref{3} is better than the one
obtained by Parker but worse than Hwang's for $n\geq 4$. We decided
to include this result here mainly for two reasons. First, it
appears to be the best currently known bound for surfaces and
threefolds. Moreover, the proof of Theorem \ref{3} is completely
elementary and it follows easily from Theorem \ref{Mok}. Second, the
reasoning given in the proof of Theorem \ref{3} is in principle
applicable in a much general setting, while both the approaches of
Parker and Hwang use in an essential way the special properties of
complex hyperbolic manifolds.

In Section \ref{counting}, we describe an alternative approach to
the problem of bounding the cusps, see Proposition \ref{matsusaka}.
This approach does not rely on Theorem \ref{Mok} but rather on an
old result of Matsusaka. The strategy of the proof can again be in
principle applied to a much more general setting than ball
quotients, e.g., compactifications of finite volume K\"ahler
manifolds with pinched variable negative curvature.

Finally, in Section \ref{Picard1} we show how the techniques
developed in this paper can be successfully applied to bound from
above the Picard numbers of toroidal compactifications in terms of
their volume. Moreover, we give a lower bound on the Picard number
in terms of the number of cusps. For the details see Theorems
\ref{picard} and \ref{above}.

\subsection{Preliminaries}\label{preliminaries}

The theory of compactifications of locally symmetric varieties has
been extensively studied, see for example \cite{Borel2}. For
technical reasons this theory is mainly developed for quotients of
symmetric spaces by arithmetic subgroups. In most cases this is not
a serious issue since the work of Margulis \cite{Margulis} implies
that lattices in any semi-simple Lie group of real rank bigger or
equal than two are arithmetic subgroups. Nevertheless, this theorem
does not cover the case of lattices in the complex hyperbolic space
$\mathcal{H}^{n}$ which are the main object of study in this paper.
Note that non-arithmetic lattices in $\textrm{PU}(1,n)$ were
constructed by Mostow and Deligne-Mostow; see \cite{Deligne} and the
extensive bibliography there.

It would then be desirable to develop a theory of compactifications
of finite volume complex hyperbolic manifolds which does not rely on
the arithmeticity of the defining torsion free lattices.
Fortunately, this problem was addressed by many mathematicians from
several different point of views. A compactification of finite
volume complex hyperbolic manifolds as a complex spaces with
isolated normal singularities was obtained by Siu and Yau in
\cite{Siu}. This compactification may be regarded as a
generalization of the Baily-Borel compactification defined for
arithmetic lattices in $\mathcal{H}^{n}$. A toroidal
compactification for finite volume complex hyperbolic manifolds was
described by Hummel and Schroeder in \cite{Schroeder}. In many cases
these compactifications provide explicit resolution of singularities
of the Siu-Yau compactifications. More recently Mok \cite{Mok} gave
a lucid and detailed description of these compactifications and
described many of their remarkable features.

The compactifications studied in this work are the ones described by
Hummel-Schroeder and Mok. Thus, we do not require that lattice
$\Gamma$ to be arithmetic. On the other hand, we require that all
the parabolic isometries of $\Gamma$ are \emph{unipotent}, in other
words we require that they act by translations on their invariant
horospheres. We impose this condition in order to obtain smooth
toroidal compactifications. We would like to point out that this
technicality is hidden in the construction described by Mok while it
is explicitly discussed in the work of Hummel-Scroeder. Let us
describe in more details this technical point. Recall that given a
non-uniform torsion-free lattice $\Gamma\leq \textrm{PU}(1,n)$, the
finite volume complex hyperbolic manifold $\mathcal{H}^{n}/\Gamma$
has finitely many cusps $C_{1}, ..., C_{m}$ which are in one to one
correspondence with the maximal parabolic subgroups of $\Gamma$, see
\cite{Eberlein} for more details. Given a cusp $C_{i}$, denote by
$\Gamma_{i}\leq \Gamma$ the associated maximal parabolic subgroup
and by $\textrm{HB}_{i}$ the horoball stabilized by $\Gamma_{i}$.
After choosing an Iwasawa decomposition \cite{Eberlein} for
$\textrm{PU}(1,n)$, we can identify $\partial\textrm{HB}$ with a
Heisenberg type Lie group $N_{i}$ diffeomorphic to
$\cc^{n-1}\times\rr$. Thus, the center $Z_{i}$ of $N_{i}$ is
$Z_{i}=[N_{i}, N_{i}]$ and it is isomorphic to $\rr$. Furthermore,
the simply connected Lie group $N_{i}$ comes equipped with a natural
left invariant metric and then we can consider $\Gamma_{i}$ as a
lattice in $\textrm{Iso}(N_{i})$. The isometry group of $N$ is
isomorphic to the semi-direct product
$\textrm{Iso}(N_{i})=N_{i}\rtimes U(n-1)$. We then say that
$\phi\in\Gamma_{i}$ is unipotent if it is a translation in
$\textrm{Iso}(N_{i})$. Now, the construction described by
Hummel-Scroeder and Mok produces a smooth compactification if the
quotients $\Gamma_{i}/\Gamma_{i}\cap Z_{i}$ are torsion free for all
$i$. This is the case if all the parabolic isometries of $\Gamma$
are unipotent. In the classical arithmetic case this requirement is
usually satisfied by requiring the lattice $\Gamma$ to be neat, see
\cite{Borel1} and \cite{Borel2} for more details. Let us note that
given any non-uniform lattice $\Gamma\in\textrm{PU}(1,n)$, there
always exists a finite index subgroup whose all parabolic isometries
are unipotent, in the arithmetic case see \cite{Ash} while for the
general non-arithmetic case we refer to \cite{Hummel}.

\vspace{0.5cm}

\noindent\textbf{Acknowledgements}. The first named author would
like to express his gratitude to Professor J\'anos Koll\'ar for his
constant support and for valuable discussions on the results
contained in this work. Both authors are grateful to Cinzia
Casagrande for pointing out the reference \cite{Wis2} and for
generously sharing her knowledge. This reference was crucial in
obtaining a bound which is uniform in dimension in Theorem
\ref{Mok}. We thank the organizers of the conference ``Algebraic \&
Hyperbolic Geometry - New Connections'' where a preliminary version
of this work was presented and were part of the later improvements
were conceived.

\section{A gap theorem}

In this section we prove Theorem \ref{Mok} which is the main
technical result of the paper. The proof of Theorem \ref{Mok} is
based on the following result contained in \cite{DiCerbo}, see
Theorem 4.15.

\begin{theorem}\label{tian}
Let $X$ be a smooth projective variety and let $D$ be a reduced
effective divisor with simple normal crossing support such that
$K_{X}+D$ is big and nef. Then $K_{X}+\alpha D$ is ample for
$\alpha\in(\frac{n+1}{n+2}, 1)$ if and only if there are no
irreducible curves $C$ such that $(K_{X}+D)\cdot C=0$ and
$K_{X}\cdot C\leq0$.
\end{theorem}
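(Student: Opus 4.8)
The plan is to prove the two implications separately, the forward one being a short computation and the reverse one the substantial part. For the ``only if'' direction I would argue by contraposition: if there is an irreducible curve $C$ with $(K_{X}+D)\cdot C=0$ and $K_{X}\cdot C\le 0$, then $D\cdot C=-K_{X}\cdot C\ge 0$, and hence for any $\alpha<1$
\[
(K_{X}+\alpha D)\cdot C=K_{X}\cdot C+\alpha D\cdot C=(1-\alpha)\,K_{X}\cdot C\le 0,
\]
so $K_{X}+\alpha D$ cannot be ample by Kleiman's criterion. This uses only $\alpha<1$, so the existence of a single ample member already excludes such curves.

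For the ``if'' direction, assume no such curve exists; I want to show $K_{X}+\alpha D$ is ample for every $\alpha\in(\frac{n+1}{n+2},1)$ via Kleiman, i.e.\ that it is strictly positive on $\overline{\mathrm{NE}}(X)\setminus\{0\}$. The organizing identity is $K_{X}+\alpha D=\alpha(K_{X}+D)+(1-\alpha)K_{X}$. First I would dispatch the $K_{X}$-negative extremal rays. By the Cone Theorem together with Mori's bend-and-break bound, each such ray is spanned by a rational curve $C_{j}$ with $0<-K_{X}\cdot C_{j}\le n+1$. Since $K_{X}\cdot C_{j}<0$, the hypothesis forbids $(K_{X}+D)\cdot C_{j}=0$; as $K_{X}+D$ is Cartier and nef this forces $(K_{X}+D)\cdot C_{j}\ge 1$, whence
\[
(K_{X}+\alpha D)\cdot C_{j}\ge \alpha\cdot 1+(1-\alpha)\bigl(-(n+1)\bigr)=(n+2)\alpha-(n+1)>0
\]
exactly when $\alpha>\frac{n+1}{n+2}$. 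This computation is the origin of the stated threshold.

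It then remains to control the $(K_{X}+D)$-trivial face, where the limit classes live. Since $(X,D)$ is log canonical ($D$ is reduced with simple normal crossings) and $K_{X}+D$ is nef and big, the basepoint-free theorem makes $K_{X}+D$ semiample and produces its contraction $\phi\colon X\to Y$, whose contracted face $F\subset\overline{\mathrm{NE}}(X)$ is exactly $\overline{\mathrm{NE}}(X/Y)$. On $F$ one has $(K_{X}+D)\cdot z=0$, so $(K_{X}+\alpha D)\cdot z=(1-\alpha)K_{X}\cdot z$, and positivity on $F\setminus\{0\}$ is equivalent to $K_{X}$ being $\phi$-ample. Applying the relative Cone Theorem over $Y$, any $K_{X}$-negative relative extremal ray would be spanned by a $\phi$-contracted rational curve $C$, hence with $(K_{X}+D)\cdot C=0$ and $K_{X}\cdot C<0$, again contradicting the hypothesis. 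Thus $K_{X}$ is $\phi$-nef, and since no irreducible contracted curve can have $K_{X}\cdot C\le 0$ either, one upgrades this to $\phi$-ampleness. Combining the two regimes through Kleiman's criterion on $\overline{\mathrm{NE}}(X)$ then yields ampleness of $K_{X}+\alpha D$.

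The hard part is precisely this last step: passing from positivity on honest curves and on the finitely generated $K_{X}$-negative part to strict positivity on the full closed cone $\overline{\mathrm{NE}}(X)$, where limit classes obstruct a naive pointwise estimate. The two devices that make it go through are the Cone Theorem's boundedness and local discreteness of $K_{X}$-negative extremal rays (which control the half-space $\{K_{X}<0\}$) and the semiample contraction $\phi$ (which reduces the remaining $(K_{X}+D)$-trivial face to a relative ampleness question); the care needed to glue these two regimes is where the argument is most delicate.
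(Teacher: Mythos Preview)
The paper does not give a self-contained proof of this statement; it is quoted from \cite{DiCerbo} (Theorem~4.15 there), with only the remark that the Cone Theorem and the bound $l(R)\le n+1$ on the length of extremal rays are the key ingredients. Your argument uses exactly these tools, and the computation $(K_{X}+\alpha D)\cdot C_{j}\ge (n+2)\alpha-(n+1)$ producing the threshold $\tfrac{n+1}{n+2}$ matches the paper's description precisely. So your strategy is the intended one.

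There is one step that is under-argued as written. After establishing that $K_{X}$ is $\phi$-nef and that $K_{X}\cdot C>0$ for every $\phi$-contracted irreducible curve, you write that ``one upgrades this to $\phi$-ampleness.'' Positivity on all curves does not by itself imply relative ampleness (strictly nef non-ample divisors exist), so this sentence hides a genuine argument. One clean way to fill it: since $X$ is smooth (hence $(X,0)$ is klt), $K_{X}$ is $\phi$-nef, and $\phi$ is birational (so $(a-1)K_{X}$ is automatically $\phi$-big for $a>1$), the relative basepoint-free theorem makes $K_{X}$ $\phi$-semiample; the resulting factorization over $Y$ contracts exactly those curves $C$ with $(K_{X}+D)\cdot C=0$ and $K_{X}\cdot C=0$, which the hypothesis excludes, so the factorization is an isomorphism and $K_{X}$ is $\phi$-ample. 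With this said, your proof is complete and in line with the approach the paper refers to. (A minor side remark: for the semiampleness of $K_{X}+D$ itself you invoke the basepoint-free theorem for the lc pair $(X,D)$; this is fine but requires the dlt/lc version, e.g.\ via Fujino, rather than the bare klt statement.)
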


The key ingredient in the proof of Theorem \ref{tian} is the Cone
Theorem. In particular the fact that $\alpha\in(\frac{n+1}{n+2}, 1)$
is obtained using the bound on the length of extremal rays given in
the Cone Theorem. Let us recall the definition of an extremal ray.
We refer to \cite{Kol2} and \cite{KM} for more details.

\begin{definition}
Let $N\subset \rr^{m}$ be a cone. A subcone $M\subset N$ is called
extremal if $u,v\in N$, $u+v\in M$ imply that $u,v\in M$. A
$1$-dimensional extremal subcone is called an extremal ray.
\end{definition}

In this paper we will consider only extremal rays $R$ of the cone of
effective $1$-cycles $\overline{NE}(X)$ such that $K_{X}\cdot Z<0$
for any effective $1$-cycle $Z$ in $R$. For such extremal rays we
define the length of $R$ as \bdism l(R):=\min\left\{-K_{X}\cdot C
\:|\: \text{$C$ is a rational curve with numerical class in $R$}
\right\}. \edism

The length of extremal rays in $\overline{NE}(X)$ has been
extensively studied. The Cone Theorem gives that $l(R)\leq n+1$. On
the other hand, there are many classification results regarding
varieties with extremal rays of maximal length. We will prove a
strong bound on the length of extremal rays of toroidal
compactifications.

\begin{theorem}\label{raggi}
Let $(X,D)$ be a toroidal compactification of dimension $n$. Then
$l(R)\leq 1$ for any extremal ray $R$.
\end{theorem}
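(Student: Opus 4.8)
The plan is to exploit the special structure of toroidal compactifications: the boundary divisor $D$ is a disjoint union of abelian varieties $D_i$, each of which is an étale quotient of an abelian variety and has trivial normal bundle behaviour controlled by the cusp geometry. The key fact I would use is that $K_X + D$ is big and nef (as stated in the introduction and used in Theorem \ref{tian}), together with the crucial positivity input that the components $D_i$ are abelian, hence contain no rational curves at all. Since $l(R)$ is defined as the minimum of $-K_X \cdot C$ over rational curves $C$ in the extremal ray $R$, the whole argument should come down to analyzing where such rational curves can live and intersecting them cleverly with $K_X + D$.

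First I would fix a $K_X$-negative extremal ray $R$ and a rational curve $C$ with class in $R$ achieving the minimum, so $l(R) = -K_X \cdot C$. Because $K_X + D$ is nef, we have $(K_X + D) \cdot C \geq 0$, which rearranges to
\[
-K_X \cdot C \leq D \cdot C.
\]
So it suffices to bound $D \cdot C$ from above by $1$. Here the plan splits according to whether $C$ is contained in $D$ or not. If $C \subset D$, then $C$ lies in some component $D_i$, which is an abelian variety and therefore contains no rational curves — this contradicts $C$ being rational, so this case cannot occur. Hence $C \not\subset D$, and then $D \cdot C = \sum_i D_i \cdot C \geq 0$ is a genuine (effective, nonnegative) intersection count with each $D_i \cdot C \geq 0$.

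The heart of the argument, and the step I expect to be the main obstacle, is showing $D \cdot C \leq 1$, i.e. that $C$ meets the boundary in total multiplicity at most one. The natural tool is the Cone Theorem bound $l(R) \leq n+1$ combined with the adjunction-type / normal bundle information coming from the abelian structure of the $D_i$. I would try to run an argument in the spirit of \cite{DiCerbo}: consider the restriction of relevant divisors to $D_i$ and use that $K_X|_{D_i}$ together with the conormal bundle $N^*_{D_i/X}$ governs $(K_X + D)|_{D_i}$, which on an abelian variety forces a rigidity that caps how positively $D_i$ can meet a $K_X$-negative extremal curve. More concretely, if $D_i \cdot C \geq 2$ for some $i$, I would aim to derive a contradiction with the length bound by deforming $C$ (using that $-K_X \cdot C < 0$ guarantees a large family of deformations through a point) and showing these deformations are trapped by the abelian boundary, contradicting its hyperbolic-type nonnegativity.

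Alternatively, and perhaps more cleanly, I would try to leverage the uniform ampleness of $K_X + \alpha D$ for $\alpha$ close to $1$ that is the content of Theorem \ref{Mok} — but since \ref{raggi} is logically upstream of \ref{Mok}, I must instead rely only on Theorem \ref{tian} and the geometric input about abelian boundary components. Thus the cleanest route is: establish $C \not\subset D$ and $-K_X \cdot C \leq D \cdot C$ as above, then show by a local analysis of the cusp compactification (the $D_i$ being abelian with explicitly described normal bundles) that any extremal rational curve can meet $D$ transversally in at most one point, giving $D \cdot C \leq 1$ and hence $l(R) = -K_X \cdot C \leq 1$. \emph{The main difficulty} will be making this last intersection bound rigorous without circularly invoking Theorem \ref{Mok}; I expect this to require a direct examination of the toroidal structure near each cusp, where the exceptional abelian divisor's negativity (its normal bundle is anti-ample along the cusp fibration) pins down the contact order of extremal curves.
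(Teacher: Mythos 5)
Your reduction to bounding $D \cdot C$ is where the argument breaks, and it breaks irreparably: the bound you need, $D \cdot C \leq 1$, is false --- in fact the hyperbolicity of $X \backslash D$ forces the \emph{opposite} inequality $D \cdot C \geq 3$ for every rational curve $C \not\subset D$. A rational curve meeting $D$ in total multiplicity at most two would yield a holomorphic image of $\pp^{1}$ with at most two punctures inside the complete hyperbolic manifold $X \backslash D$, which is impossible since curvature can only decrease along complex submanifolds of a negatively curved K\"ahler manifold (equivalently, $\cc$ and $\cc^{*}$ are not hyperbolic). The paper itself states and uses exactly this inequality $D \cdot C_{i} \geq 3$ in the proof of Theorem \ref{Mok}. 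So your chain $l(R) = -K_{X}\cdot C \leq D\cdot C$, obtained from nefness of $K_{X}+D$, is correct but can never yield anything better than $l(R) \leq D \cdot C$ with $D \cdot C \geq 3$; it is structurally incapable of proving $l(R)\leq 1$. Moreover, in Hirzebruch's examples there are extremal $(-1)$-curves with $-K_{X}\cdot C = 1$ and $D \cdot C = 4$, so the two quantities are genuinely far apart, and no local analysis of contact orders at the cusps can close the gap --- the anti-ample normal bundle of $D_{i}$ constrains curves \emph{inside} $D_{i}$, not the intersection multiplicity of curves crossing it. Your preliminary observations (that $C \not\subset D$ because abelian varieties contain no rational curves, and that $-K_{X} \cdot C \leq D\cdot C$) are fine, but they are only the easy part.

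The paper's actual mechanism goes through the contraction morphism of $R$ rather than through intersection numbers of a single minimal curve. First one shows that every irreducible component $F$ of a non-trivial fiber of the contraction satisfies $\dim F \leq 1$: any contracted curve $C$ has $D \cdot C > 0$ (since $K_{X} \cdot C < 0$ and $K_{X} + D$ is nef), while a curve contained in some $D_{i}$ has $D \cdot C < 0$ by anti-ampleness of the normal bundle, so $D \cap F$ is finite and hence $\dim F \leq 1$. Wi\'sniewski's inequality $\dim F \geq l(R) - 1$ then gives $l(R) \leq 2$, and Wi\'sniewski's classification of contractions with fibers of dimension at most one says $\phi$ is either a conic bundle or the blow-up of a smooth variety along a smooth codimension-two center (in which case $l(R)=1$). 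The conic bundle case is then excluded by a separate argument: for surfaces, a case analysis of minimal ruled surfaces ending again in a $\pp^{1}$ with one puncture in $X\backslash D$; in dimension at least three, bend-and-break applied to the family of fibers viewed on the Baily--Borel compactification, where --- tellingly --- the inequality $D \cdot F \geq 3$ that you tried to reverse is used \emph{positively}, to produce the three fixed points needed to force the family of rational curves to split. Without some version of this fiber-dimension and classification input, the statement $l(R)\leq 1$ is not reachable from the positivity of $K_{X}+D$ and the abelian boundary alone.
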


We will prove Theorem \ref{raggi} in few steps. The first and key
result is the following.

\begin{lemma}\label{fiberdim}
Let $(X,D)$ be a toroidal compactification. Let $R$ be an extremal
ray of $X$ and let $F$ be an irreducible component of a non-trivial
fiber of the contraction of $R$. Then $\dim(F)\leq 1$.
\end{lemma}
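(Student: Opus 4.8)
The plan is to play the negativity of the boundary against the hyperbolicity of the interior, finishing with a bend-and-break argument. Write $\phi\colon X\to Y$ for the contraction of $R$, and recall that $\phi$ contracts exactly the curves whose class lies in $R$, all of which satisfy $K_X\cdot C<0$. My first step would be to show that $K_X$ is ample on every boundary component, so that no contracted curve can sit inside $D$. Since the $D_i$ are pairwise disjoint and each is a smooth abelian variety, adjunction gives $(K_X+D)|_{D_i}=(K_X+D_i)|_{D_i}=K_{D_i}=\oo_{D_i}$, whence $K_X|_{D_i}\equiv -D_i|_{D_i}=-N_{D_i/X}$. Because each $D_i$ is contracted to an isolated cusp point of the Siu--Yau compactification $X^{*}$, its normal bundle $N_{D_i/X}$ is anti-ample, so $K_X|_{D_i}$ is ample. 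Hence every curve contained in $D$ is $K_X$-positive and cannot lie in $R$; equivalently $\phi|_{D_i}$ contracts no curve and is therefore finite for each $i$. In particular $F\not\subseteq D$, and $F\cap D=(\phi|_{D})^{-1}(\phi(F))$ is a \emph{finite} set, which I denote $S$.

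Next I would use that a non-trivial fibre of a $K_X$-negative extremal contraction is covered by rational curves with class in $R$. Fix a minimal such covering family $\mathcal{M}$ of $F$, whose general member $\ell$ is free and birational onto its image. By the first step $\ell\not\subseteq D$, so restriction to the interior yields a non-constant holomorphic map $\pp^{1}\setminus \ell^{-1}(D)\to X^{o}$. As $X^{o}$ is a complex hyperbolic manifold it is Kobayashi hyperbolic, and no non-constant holomorphic map to it can be defined on $\pp^{1}$, $\cc$, or $\cc^{*}$; therefore $\ell^{-1}(D)$ must consist of at least three points. Thus every general member of $\mathcal{M}$ meets the finite set $S$ in at least three points, hence (for general $\ell$) in at least two distinct points of $S$.

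Finally, suppose for contradiction that $\dim F\geq 2$. Then $\dim\mathcal{M}\geq\dim F-1\geq 1$, because the universal family dominates $F$. Assigning to a general member an unordered pair of points of $S$ through which it passes defines a map from $\mathcal{M}$ to the finite set of pairs in $S$, so some fibre is positive-dimensional: this is a genuinely moving, positive-dimensional family of rational curves in $R$ all passing through two fixed distinct points $p,q\in S$. Mori's bend-and-break then forces a member of this family to degenerate to a reducible or non-reduced one-cycle; since $R$ is extremal, every component of the limit cycle has class in $R$ and strictly smaller anticanonical degree, contradicting the minimality of $\mathcal{M}$. Therefore $\dim F\leq 1$.

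The main obstacle is this last step. The finiteness of $F\cap D$ obtained in the first step is exactly what allows one to fix the two base points inside a \emph{finite} set, and the ``at least three punctures'' supplied by hyperbolicity is what guarantees that two such points are available on a general member; these two inputs are genuinely both needed. The delicate part is the deformation-theoretic bookkeeping: one must check that the family through the two fixed points remains positive-dimensional modulo reparametrisation and that the general member is free, immersed, and meets $D$ in distinct points, so that bend-and-break indeed produces a splitting of the minimal class. Wi\'sniewski's results on fibres of extremal contractions could be used to streamline or replace this count.
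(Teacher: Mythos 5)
Your first two steps are correct and entirely in the spirit of the paper: adjunction on the abelian components plus the anti-ampleness of $N_{D_i/X}$ (Theorem 1 in \cite{Mok}) shows no contracted curve lies in $D$, hence $\phi|_{D}$ is finite and $F\cap D$ is a finite set; and Kobayashi hyperbolicity of $X^{o}$ forces any rational curve not contained in $D$ to meet $D$ in at least three points of its normalization. (The paper uses the first fact in its proof of this very lemma, and the second only later, in the proof of Theorem \ref{raggi} and Theorem \ref{Mok}.) The endgame, however, has two genuine gaps. First, the ``at least three points'' live on the normalization $\pp^{1}$: their images need not be distinct points of $S$, since a member could pass through a single point of $S$ with three branches, and bend-and-break with fixed points requires two \emph{distinct} target points (its proof contracts two disjoint sections of a ruled surface); so the two base points you want to fix may simply not exist. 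Second, and more seriously, even granting the splitting, the claimed contradiction with ``the minimality of $\mathcal{M}$'' does not materialize: minimality of a covering family is a condition on curves through a \emph{general} point of $F$, whereas the degenerate cycle produced by bend-and-break passes through the special points $p,q\in S\subseteq D$. Its lower-degree components need not move, need not cover $F$, and need not pass through a general point, so no smaller covering family is produced, and iterating the breaking does not terminate in a contradiction either. It is no accident that the paper deploys bend-and-break only in ruling out conic bundles, where the contradiction comes from the rigid structure of a conic (at most two components), not from degree minimality.

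The irony is that you do not need bend-and-break at all: your steps already prove the lemma by the dimension count the paper itself uses. Every rational curve in the covering family of $F$ must meet $D$ (a curve disjoint from $D$ would give a nonconstant map $\pp^{1}\to X^{o}$, impossible by hyperbolicity), so $F\cap D\neq\emptyset$, while your first step gives $F\not\subseteq D$. Since $D$ is an effective divisor, every irreducible component of $F\cap D$ then has dimension at least $\dim F-1$, and the finiteness of $F\cap D$ forces $\dim F\leq 1$. This is exactly the shape of the paper's argument, with one substitution: the paper obtains $D\cdot C>0$ for \emph{every} curve $C\subseteq F$ (rational or not) from the nefness of $K_{X}+D$, via $D\cdot C=(K_{X}+D)\cdot C-K_{X}\cdot C>0$ for $C\in R$, and concludes $\dim(D\cap F)=0$ directly; your hyperbolicity substitute is legitimate and even avoids invoking the nefness of $K_{X}+D$ at this stage.
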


\begin{proof}
Let $C$ be a curve in $F$. Since $C$ is contracted by the
contraction of $R$, the Contraction Theorem, see \cite{KM}, gives
that $C\in[R]$. Since $R$ is $K_{X}$-negative and $K_{X}+D$ is nef,
we have that $D\cdot C>0$. On the other hand, if $C\subseteq D_{i}$,
then $D\cdot C<0$ because the normal bundle of $D_{i}$ is
anti-ample. In particular, $\dim(D\cap F)=0$ which implies
$\dim(F)\leq 1$.
\end{proof}

The dimension of the fibers of the contraction controls the length
of the corresponding extremal ray. Recall the following result in
\cite{Wis2}.

\begin{theorem}[Wi\'sniewski]\label{Cinzia}
If $F$ is a nontrivial fiber of a contraction of $R$ then \bdism
\dim(F)\geq l(R)-1. \edism
\end{theorem}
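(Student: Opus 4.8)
The plan is to prove this by Mori's theory of deformations of rational curves together with bend-and-break, so that the only external input is the standard fact that every nontrivial fiber of a $K_X$-negative contraction is covered by rational curves whose numerical class lies in $R$. First I would fix a point $x\in F$ and choose a rational curve $C$ through $x$ with $[C]\in R$ of minimal anticanonical degree among all such curves through $x$; by the very definition of the length we have $-K_X\cdot C\geq l(R)$. Writing $f\colon\pp^{1}\to X$ for the normalization normalized so that $f(0)=x$, the deformation space $\operatorname{Hom}(\pp^{1},X;\,0\mapsto x)$ has Zariski tangent space $H^{0}(\pp^{1},f^{*}T_{X}(-1))$ and obstructions in $H^{1}(\pp^{1},f^{*}T_{X}(-1))$. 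Since $f^{*}T_{X}$ has rank $n$ and degree $-K_{X}\cdot C$, an Euler-characteristic count gives the lower bound $\dim_{[f]}\operatorname{Hom}(\pp^{1},X;\,0\mapsto x)\geq\chi(f^{*}T_{X}(-1))=-K_{X}\cdot C$, valid at every point of $\operatorname{Hom}$ by the usual obstruction theory.

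Next I would bound the locus swept out by these deformations. Consider the second evaluation $\mathrm{ev}\colon\operatorname{Hom}(\pp^{1},X;\,0\mapsto x)\to X$ sending $f'$ to $f'(\infty)$. Its image lies in the union of the deformed curves through $x$; since each such curve keeps the class $[C]\in R$ throughout the connected component of $[f]$, it is contracted by the contraction of $R$ and passes through $x\in F$, so the entire union is contained in the single fiber $F$. The crucial step is to bound the fibers of $\mathrm{ev}$ from above. A component of $\operatorname{Hom}(\pp^{1},X;\,0\mapsto x,\,\infty\mapsto y)$ of dimension at least $2$ would, after quotienting by the one-dimensional reparametrization group $\mathbb{G}_{m}$ fixing both $0$ and $\infty$, produce a genuine one-parameter nonconstant family of rational curves passing through the two fixed points $x$ and $y$. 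Mori's bend-and-break then forces this family to degenerate into a connected reducible or non-reduced $1$-cycle through $x$, from which one extracts a rational curve through $x$ of strictly smaller anticanonical degree, contradicting the minimality of $-K_{X}\cdot C$. Hence every fiber of $\mathrm{ev}$ has dimension at most one, so the image of $\mathrm{ev}$ has dimension at least $\dim_{[f]}\operatorname{Hom}(\pp^{1},X;\,0\mapsto x)-1\geq -K_{X}\cdot C-1$.

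Combining the two steps yields $\dim F\geq -K_{X}\cdot C-1\geq l(R)-1$, which is exactly the asserted inequality. I expect the bend-and-break step to be the main obstacle: one must make precise the degeneration of a two-pointed family of minimal curves and verify that the limiting cycle genuinely splits off a rational component through the marked point of strictly smaller degree, which is the rigidity lying at the heart of Mori's method. By comparison, the Euler-characteristic estimate on deformations and the containment of the swept-out locus in $F$ are routine, the latter because numerical equivalence is constant in flat families and all members of the family are contracted by the extremal contraction.
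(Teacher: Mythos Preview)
Your argument is correct and is essentially Wi\'sniewski's original proof: the Riemann--Roch lower bound $\dim_{[f]}\operatorname{Hom}(\pp^{1},X;0\mapsto x)\geq -K_{X}\cdot C$ combined with the bend-and-break bound $\dim\operatorname{Hom}(\pp^{1},X;0\mapsto x,\infty\mapsto y)\leq 1$ for a minimal-degree curve through $x$ is exactly the mechanism in \cite{Wis2}. The only caveat worth recording is that one should work on a single irreducible component of $\operatorname{Hom}(\pp^{1},X;0\mapsto x)$ through $[f]$ before applying the fiber-dimension inequality to the evaluation map; otherwise the step ``image has dimension at least source minus one'' is not literally justified.

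Note, however, that the paper does not supply its own proof of this statement: it is quoted verbatim from Wi\'sniewski \cite{Wis2} as an input to the argument. So there is nothing to compare your approach against in the paper itself; you have reproduced the standard proof that the authors are citing.
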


Combining Theorem \ref{Cinzia} and Lemma \ref{fiberdim} we obtain
the following.

\begin{corollary}
Let $(X,D)$ be a toroidal compactification and let $R$ be an
extremal ray of $X$. Then $l(R)\leq 2$.
\end{corollary}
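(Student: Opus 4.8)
The plan is to combine Lemma \ref{fiberdim} with Wi\'sniewski's inequality (Theorem \ref{Cinzia}) directly, since the two results pin down $\dim(F)$ from opposite sides. First I would recall that the length $l(R)$ is only defined for $K_{X}$-negative extremal rays, so I may assume $R$ is such a ray. Its contraction is then not an isomorphism, as it contracts every curve whose numerical class lies in $R$; in particular the contraction of $R$ admits at least one non-trivial fiber $F$.

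Next I would bound $\dim(F)$ from above. By Lemma \ref{fiberdim}, every irreducible component of a non-trivial fiber of the contraction of $R$ has dimension at most one. Since the dimension of a variety is the maximum of the dimensions of its irreducible components, this upgrades the component-wise statement to a bound on the whole fiber, namely $\dim(F)\leq 1$.

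Finally, Theorem \ref{Cinzia} supplies the matching lower bound $\dim(F)\geq l(R)-1$. Chaining the two inequalities gives $l(R)-1\leq\dim(F)\leq 1$, that is $l(R)\leq 2$, which is the claimed estimate.

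The only point requiring a moment's care, and the closest thing to an obstacle in an otherwise immediate argument, is precisely the passage from the component-level bound furnished by Lemma \ref{fiberdim} to a bound on $\dim(F)$ itself; this is what allows the component-level geometry of toroidal compactifications to be fed into the fiber-level inequality of Wi\'sniewski. Once this elementary observation is made, no further work is needed.
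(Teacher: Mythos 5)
Your proposal is correct and follows exactly the paper's route: the paper obtains this corollary by directly combining Lemma \ref{fiberdim} with Wi\'sniewski's bound $\dim(F)\geq l(R)-1$ from Theorem \ref{Cinzia}, just as you do. The small points you spell out --- that a $K_{X}$-negative contraction has a non-trivial fiber, and that the component-wise bound of Lemma \ref{fiberdim} yields $\dim(F)\leq 1$ for the whole fiber --- are left implicit in the paper but are exactly the intended justification.
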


In order to prove Theorem \ref{raggi}, it suffices to prove that the
case $l(R)=2$ and $\dim(F)\leq 1$ does not occur. Such extremal
contractions have been classified by Wi\'sniewski \cite{Wis2}.

\begin{theorem}[Wi\'sniewski]\label{Wisn}
Let $X$ be a smooth variety. Let $\phi: X\rightarrow Y$ be the
contraction of an extremal ray $R$ of $X$ such that $\dim(F)\leq 1$.
Then $Y$ is smooth and either
\begin{enumerate}
\item $\phi: X\rightarrow Y$ is a conic bundle, or
\item $\phi: X\rightarrow Y$ is the blow-up of the variety $Y$ along a smooth subvariety $Z$ of codimension $2$.
\end{enumerate}
\end{theorem}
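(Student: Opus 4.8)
The plan is to split the analysis according to whether $\phi$ is of fiber type ($\dim Y<n$) or birational ($\dim Y=n$), and in each case to extract the local structure from the length $l(R)$ together with the fiber-dimension bound. The first observation is that Theorem \ref{Cinzia} gives $l(R)-1\leq\dim F\leq 1$, so $l(R)\in\{1,2\}$. Since $R$ is $K_{X}$-negative, every nontrivial fiber is connected, positive-dimensional, hence exactly one-dimensional, and is covered by rational curves whose numerical class lies in $R$ (by Mori's bend-and-break applied along the contraction). Thus the candidates for the two alternatives are precisely the numerical cases $l(R)=2$ (conic bundle) and $l(R)=1$ (blow-up), and the whole problem is to upgrade this numerical dichotomy to an honest geometric structure.

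First I would treat the fiber type case $\dim Y<n$. Because the relative dimension is bounded by $\dim F\leq 1$ and $\phi$ is not finite, $\phi$ is equidimensional of relative dimension $1$, with generic fiber a smooth rational curve and $-K_{X}\cdot F=l(R)=2$. I would then argue that $Y$ is smooth and that $\mathcal{E}:=\phi_{*}\oo_{X}(-K_{X})$ is locally free of rank $3$, so that the relative $(-K_{X})$-ampleness embeds $X$ as a divisor of relative degree $2$ in $\pp(\mathcal{E})$, i.e.\ a conic bundle. The delicate point is that the (possibly singular) special fibers are still plane conics rather than higher-dimensional degenerations; this is exactly where the bound $\dim F\leq 1$ is used, after which $Y$ normal (from $\phi_{*}\oo_{X}=\oo_{Y}$) combined with equidimensionality yields flatness by miracle flatness, and the constancy of $h^{0}(F,-K_{X}|_{F})$ gives local freeness of $\mathcal{E}$.

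Next I would treat the birational case $\dim Y=n$. Let $E$ be the exceptional locus and $Z:=\phi(E)$. Every fiber over a point of $Z$ is one-dimensional, so the Ionescu--Wi\'sniewski inequality $\dim E+\dim F\geq n+l(R)-1$ already forces $l(R)=1$ (the value $l(R)=2$ would make $\dim E\geq n$, impossible for a birational contraction), $\dim E=n-1$, and $\dim Z=n-2$. Hence $E$ is of pure codimension one and $E\to Z$ has one-dimensional fibers. I would then identify $E\to Z$ as a $\pp^{1}$-bundle and compute, for a fiber $\ell\cong\pp^{1}$, the normal bundles $N_{\ell/X}\cong\oo_{\pp^{1}}^{\oplus(n-2)}\oplus\oo_{\pp^{1}}(-1)$ and $\oo_{X}(E)|_{\ell}\cong\oo_{\pp^{1}}(-1)$. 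These computations, fed into a contractibility criterion of Nakano--Fujiki type (equivalently Ando's argument), show that $Y$ is smooth along $Z$, that $Z$ is smooth of codimension $2$, and that $\phi$ is exactly the blow-up of $Y$ along $Z$.

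The main obstacle, common to both cases, is passing from the cheap numerical control coming from Theorem \ref{Cinzia} and the Ionescu--Wi\'sniewski inequality to the actual bundle/product structure and to the smoothness of the target $Y$. This step requires the deformation theory of the minimal rational curves spanning $R$: one must compute $H^{0}(F,N_{F/X})$ and the dimension of the family of such curves, and use bend-and-break to exclude pathological fibers, in order to pin down the fibers as genuine $\pp^{1}$'s carrying the expected normal bundles. Once this local analysis is in place, the split into the conic bundle and blow-up alternatives follows from the standard contraction criteria applied to the two normal-bundle profiles.
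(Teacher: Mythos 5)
The paper does not actually prove this statement: it is quoted as a classification theorem of Wi\'sniewski with a citation to \cite{Wis2} (for one-dimensional fibers the result essentially goes back to Ando's 1985 paper on extremal rays), so there is no internal proof to compare against. Your sketch is a reconstruction of that published argument --- the split into fiber type versus birational contractions, the relative degree $2$ of the generic fiber and the embedding of $X$ into $\pp(\phi_{*}\oo_{X}(-K_{X}))$ in the first case, and in the second case the Ionescu--Wi\'sniewski inequality forcing the exceptional locus $E$ to be divisorial with $\dim Z=n-2$, followed by the normal-bundle computation $N_{\ell/X}\cong \oo_{\pp^{1}}^{\oplus(n-2)}\oplus\oo_{\pp^{1}}(-1)$ and a contractibility criterion in the style of Nakano--Fujiki--Ando. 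As a plan this is the correct one and matches the literature proof the paper is importing as a black box.

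Two local slips are worth flagging. First, your opening numerical dichotomy ``$l(R)=2$ means conic bundle, $l(R)=1$ means blow-up'' is false as stated: a conic bundle with degenerate fibers contains lines $\ell$ with $-K_{X}\cdot\ell=1$, so $l(R)=1$ there as well, and $l(R)=1$ does not place you in the birational case. Your actual argument does not rely on this (you split on $\dim Y$ instead, and in the fiber-type case all you need is that the \emph{generic} fiber has anticanonical degree $2$, which follows from $N_{F/X}\cong\oo_{F}^{\oplus(n-1)}$ for a general fiber $F\cong\pp^{1}$, not from the value of $l(R)$), so the claim should simply be deleted rather than repaired. Second, in the fiber-type case your order of deductions is circular: miracle flatness requires the base to be \emph{regular}, not merely normal, so you cannot derive flatness from ``$Y$ normal plus equidimensional'' and only afterwards conclude that $Y$ is smooth. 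In Ando's argument the smoothness of $Y$ is established first, from the one-dimensionality of all fibers together with the deformation theory of the minimal rational curves, and flatness plus the local freeness of $\phi_{*}\oo_{X}(-K_{X})$ come after. With these corrections your sketch is a faithful outline of the cited theorem's proof.
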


Our goal is to prove that toroidal compactifications do not admit a
conic bundle structure.  This concludes the proof of Theorem
\ref{raggi}.

\begin{proof}[Proof of Theorem \ref{raggi}]

By Lemma \ref{fiberdim} and Theorem \ref{Wisn} we need to show that
$\phi: X\rightarrow Y$ cannot be a conic bundle. We first deal with
the case $\dim(X)=2$. In this case $l(R)=2=\dim(X)$ and by Proposition 2.4.2
in \cite{Wis}, we obtain that the Picard number of $X$, say $\rho(X)$, is equal to two. Since $D$ is an elliptic curve, we have
that $\phi$ restricted to $D$ is a surjective morphism and in
particular $g(Y)\leq 1$. Since $\rho(X)=2$, the ruled surface $X$
must be minimal. If $Y$ is a rational curve then $X$ is a Hirzebruch
surface. On a Hirzebruch surface the only curve with negative self
intersection is the zero section, which is a rational curve, so it
cannot be $D$.

We can then assume that $Y$ is an elliptic curve. Since $X$ is minimal, we know that $X=\pp(\mathcal{E})$ where $\mathcal{E}$ is a normalized rank 2 vector bundle over $Y$. 
Following Proposition V.2.8 in \cite{Har}, we can normalize $\mathcal{E}$ so that the integer
$e=-deg(\mathcal{E})$ is an invariant on $X$. Recall that, under this normalization, the canonical section of $X$, say $C_{0}$, satisfies $C^{2}_{0}=-e$.
Since $Y$ is elliptic, by a theorem of Atiyah we know that $e\geq -1$, see
Theorem V.2.15 in \cite{Har}. If $e\leq 0$ then $-K_{X}$ is nef and
in particular by adjunction $D^{2}\geq 0$. We then have $e\geq 1$. By Proposition V.2.20 in \cite{Har}, any irreducible curve $Y\neq C_{0}$, $F$ must be numerically equivalent to
\begin{align}\notag
Y\equiv aC_{0}+bF,
\end{align}
where $F$ is a fiber of the ruling and $a>0$, $b\geq ae$. Thus
\begin{align}\notag
Y^{2}=-a^{2}e+2ab\geq a^{2}e\geq 0,
\end{align}
which then shows that the only curve with negative self-intersection is the zero section.
This implies that $D$ is the zero section. We can then find a
$\pp^{1}$ with just one puncture in $X\backslash D$. Since curvature
can only decrease along complex submanifolds of a negatively curved
K\"ahler manifold, we obtain a contradiction.

Now suppose that $\dim(X)\geq 3$ and $\phi$ is a conic bundle. We will 
work on the Baily-Borel compactification $X^{*}$. Recall that $X^{*}$ is 
obtained from $X$ contracting the components of the divisor $D$. In particular, 
the fibers of $\phi$ define a family of rational curves on $X^{*}$. Let $F$ be a fiber 
of $\phi$. First, assume that every fiber of $\phi$ is a smooth conic. Since 
$D\cdot F\geq 3$, the family of rational curves on $X^{*}$ has at 
least three fixed points. Recall that bend and break, see Lemma 1.9 in \cite{KM}, 
implies that if a family of rational curves moves with at least two fixed points then  
it must split, i.e. it contains a reducible curve.  Applying bend and break to our 
situation, we obtain that the family of rational curves on $X^{*}$ contains 
a reducible curve which corresponds to a reducible fiber of $\phi$. This contradicts our 
assumption on the smoothness of the fibers. 

In general, $\phi$ has reducible fibers. Since the discriminant locus of a conic bundle 
is a divisor on the base, the reducible fibers form a new family of rational curves on $X^{*}$ of positive dimension because $\dim(X)\geq 3$. The family formed by the irreducible components has at least two fixed points and applying again bend and break, we obtain a new splitting. In particular, we have a fiber with three components. This is a contradiction because every fiber of $\phi$ is isomorphic to a conic in $\pp^{2}$. 

\end{proof}

Theorem \ref{raggi} implies an interesting structure result on
toroidal compactifications with non-nef canonical divisor.

\begin{corollary}[Theorem \ref{structure1}]\label{structure}
Let $(X,D)$ be a toroidal compactification such that $K_{X}$ is not
nef. Then $X$ is the blow-up of a smooth variety $Y$ along a smooth
subvariety $Z$ of codimension $2$.
\end{corollary}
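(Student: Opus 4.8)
The plan is to deduce Corollary~\ref{structure} directly from the length bound in Theorem~\ref{raggi} together with Wi\'sniewski's classification (Theorem~\ref{Wisn}). First I would observe that since $K_X$ is assumed not nef, there exists by the Cone Theorem an extremal ray $R$ of $\overline{NE}(X)$ on which $K_X$ is negative, and hence a contraction $\phi\colon X\to Y$ of $R$. The goal is to show this contraction is forced to be of type~(2) in Theorem~\ref{Wisn}, i.e.\ the blow-up of a smooth variety $Y$ along a smooth codimension-$2$ subvariety $Z$.

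The key input is that Theorem~\ref{raggi} gives $l(R)\le 1$ for every extremal ray of a toroidal compactification. Combined with Lemma~\ref{fiberdim}, every nontrivial fiber $F$ of $\phi$ satisfies $\dim(F)\le 1$, so Theorem~\ref{Wisn} applies and leaves only two possibilities: a conic bundle or a smooth codimension-$2$ blow-up. To eliminate the conic bundle case I would invoke the length computation: for a conic bundle the general fiber is a smooth conic, a curve $C$ with $-K_X\cdot C = 2$, forcing $l(R)=2$, which directly contradicts $l(R)\le 1$ from Theorem~\ref{raggi}. (Indeed, the proof of Theorem~\ref{raggi} has already ruled out conic bundles by a bend-and-break argument, so one may simply cite it.) This leaves case~(2), giving exactly the desired structure.

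The main obstacle, such as it is, is purely bookkeeping rather than conceptual: one must be careful to guarantee that the extremal ray produced from the non-nefness of $K_X$ is genuinely $K_X$-negative (so that the notion of length and the contraction apply), which follows from the Cone Theorem since the non-nef locus of $K_X$ meets $\overline{NE}(X)$ in a $K_X$-negative extremal ray. Once this ray is fixed, the contraction $\phi\colon X\to Y$ has smooth target $Y$ by Theorem~\ref{Wisn}, and the real content of the corollary has already been carried out in the preceding results; the statement is essentially a repackaging of Theorem~\ref{raggi} through Wi\'sniewski's dichotomy. I would therefore keep the proof short, citing Theorem~\ref{raggi}, Lemma~\ref{fiberdim}, and Theorem~\ref{Wisn}, and emphasizing only that $K_X$ not nef supplies the required $K_X$-negative extremal ray and that $l(R)\le1$ excludes the conic bundle alternative.
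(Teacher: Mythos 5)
Your overall route is the same as the paper's: the corollary is stated there with no separate proof precisely because it is immediate from the Cone Theorem (non-nef $K_X$ yields a $K_X$-negative extremal ray and its contraction), Lemma~\ref{fiberdim} (fibers of dimension at most one), Wi\'sniewski's dichotomy (Theorem~\ref{Wisn}), and the fact that the proof of Theorem~\ref{raggi} has eliminated the conic bundle alternative. Your parenthetical remark --- simply cite the conic-bundle exclusion already carried out in the proof of Theorem~\ref{raggi} --- is exactly the intended argument, and with that citation your proof is complete and correct.

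However, your \emph{primary} mechanism for excluding conic bundles is flawed, and it is worth seeing why. You claim that since the general fiber of a conic bundle is a smooth conic $C$ with $-K_X\cdot C=2$, one is ``forced'' to have $l(R)=2$, contradicting Theorem~\ref{raggi}. But $l(R)$ is defined as a \emph{minimum} over rational curves with class in $R$: a conic bundle with nonempty discriminant locus has singular fibers, and each irreducible component $C'$ of such a fiber is a rational curve in $R$ with $-K_X\cdot C'=1$, so that $l(R)=1$. Hence a conic bundle with degenerate fibers is perfectly compatible with the bound $l(R)\le 1$, and the numerical length bound alone cannot rule out the conic bundle case. This is precisely why the paper's proof of Theorem~\ref{raggi} does not argue numerically but instead runs a genuinely geometric bend-and-break argument on the Baily--Borel compactification, treating separately the case where all fibers are smooth and the case where reducible fibers occur. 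So your proof stands only via the fallback citation, not via the length computation; the length computation should be deleted or corrected to note that it handles at most the everywhere-smooth case.
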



We can now prove Theorem \ref{Mok}.

\begin{proof}[Proof of Theorem \ref{Mok}]
First, we would like to show that $K_{X}+D$ is big and nef. Recall
that $\mathcal{H}^{n}$ is the negatively curved complex space form.
Thus, any manifold $X^{o}=\mathcal{H}^{n}/\Gamma$, with
$\Gamma$ non-uniform, is equipped with a standard negatively curved
K\"ahler-Einstein metric of finite volume which we denote by
$\hat{\omega}$. By Th\'eor\`eme 1.1 in \cite{Sibony}, such a metric
$\hat{\omega}$ on $X^{o}$ can be regarded as a closed
\emph{positive} current on $X$. Moreover, it in not difficult to see
that $\hat{\omega}$ is in the cohomology class of $K_{X}+D$. More
precisely, let $D_{i}$ be the irreducible components of $D$ and let
$s_{i}\in \mathcal{O}_{X}(D_{i})$ be the defining sections. By
appropriately choosing Hermitian metrics $\|\cdot\|_{i}$ on
$\mathcal{O}_{X}(D_{i})$, the volume form associated to
$\hat{\omega}$ is then of the form
\begin{align}\notag
\Psi=\frac{\Omega}{\prod_i\|s_{i}\|^{2}(-\log\|s_{i}\|^{2})^{n+1}}
\end{align}
for some globally defined volume form $\Omega$ on $X$. Thus, the
Poincar\'e-Lelong formula combined with the fact that $\hat{\omega}$
is K\"ahler-Einstein gives that $\hat{\omega}\in[K_{X}+D]$. Next,
let us observe that $\omega_{P}\geq\hat{\omega}$ for some K\"ahler
current $\omega_{P}$ on $X$ with Poincar\'e type singularities along
$D$. We then have that $\hat{\omega}$ is a closed positive current
with zero Lelong numbers on $X$. Moreover, since $\hat{\omega}$ is a
regular K\"ahler metric on $X^{o}$, it is strictly positive at any
point $p\in X\backslash D$. By using a regularization argument based
on results of Demailly \cite{Demailly1}, we conclude that $K_{X}+D$
is big and nef, see also Theorem 1.3 in \cite{Shiffman}. We can
actually conclude more. In fact, we must have
$(K_{X}+D)^{\dim(Z)}\cdot Z>0$ for any subvariety $Z$ not contained
in $D$.

Finally, we want to show that if $C$ is a curve such that
$(K_{X}+D)\cdot C=0$ then $K_{X}\cdot C> 0$. By the discussion above
we know that $C$ must be contained in $D$. Then
\begin{align}\notag
K_{X}\cdot C=-D\cdot C>0,
\end{align}
since the normal bundle of $D$ in $X$ is anti-ample, see Theorem 1
in \cite{Mok}. In particular Theorem \ref{tian} implies that
$K_{X}+\alpha D$ is ample for all values of $\alpha$ close enough to
one. Let us denote by $\{C_{i}\}$ the generators of the extremal
rays in $X$. Each $C_{i}$ is a smooth rational curve and $D\cdot
C_{i}\geq 3$ for all $i$. If otherwise there would exists a
$\pp^{1}$ with less than three punctures in $X\backslash D$ which is
clearly impossible. We then conclude that $(K_{X}+D)\cdot C_{i}\geq
2$ for all $i$. Following the proof of Theorem \ref{tian} in
\cite{DiCerbo} and using the bound given by Theorem \ref{raggi}, it
follows that the $K_{X}+\alpha D$ is ample for all
$\alpha\in\left(\frac{1}{3},1\right)$.
\end{proof}

\begin{remark}
Theorem \ref{Mok} is geometrically sharp as it relates the existence
of a ``gap'', in the ampleness range of $K_{X}+\alpha D$, only to
the number of times a smooth rational curve in the ambient space $X$
intersects the boundary $D$. Elementary hyperbolic geometry tells us
that this number must be greater or equal than three, and then
the bound in Theorem \ref{Mok}. Unfortunately, in all of the
examples constructed by Hirzebruch in \cite{Hirzebruch}, the
rational curves intersect the boundary in four points. Thus, Theorem
\ref{Mok} is currently not numerically sharp. It would be extremely
interesting to construct examples of toroidal compactifications with
non-nef canonical divisor, having a rational curve that intersects
the boundary in just three points.
\end{remark}

It seems a difficult problem to understand which varieties arise as
toroidal compactifications of hyperbolic manifolds. The above
theorem gives a first step toward a possible solution of the
problem. If $n\geq 3$, it follows also from Lefschetz hyperplane
theorem.

\begin{corollary}
There are no toroidal compactification $(X,D)$ with $X$ a smooth
Fano variety.
\end{corollary}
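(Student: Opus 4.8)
The plan is to show that a smooth Fano variety cannot admit the structure of a toroidal compactification $(X,D)$, and the natural route is to contradict the positivity established in Theorem \ref{Mok}. Recall that $X$ being Fano means $-K_X$ is ample, equivalently $K_X$ is anti-ample. I would first extract from the proof of Theorem \ref{Mok} the key cohomological fact: the log-canonical divisor $K_X+D$ is big and nef, and moreover $K_X+\alpha D$ is ample for every $\alpha\in(\tfrac13,1)$. The strategy is to combine the positivity of $K_X+D$ with the anti-ampleness of $K_X$ to force a numerical contradiction on the boundary divisor $D$.

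First I would argue as follows. Since $D$ is a nonempty effective divisor (the manifold is noncompact of finite volume, so there is at least one cusp and hence at least one boundary component $D_i$), we may pick an irreducible curve $C$ lying inside some component $D_i$. From the proof of Theorem \ref{Mok} we know that the normal bundle of $D$ in $X$ is anti-ample, so $D\cdot C<0$ for such a $C$, and correspondingly $K_X\cdot C=(K_X+D)\cdot C-D\cdot C$. But if $X$ is Fano then $K_X\cdot C<0$ for \emph{every} curve $C$, so in particular $K_X\cdot C<0$ while $D\cdot C<0$; since $K_X+D$ is nef we have $(K_X+D)\cdot C\geq 0$, which gives $K_X\cdot C=(K_X+D)\cdot C-D\cdot C>0$, directly contradicting $K_X\cdot C<0$. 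This numerical clash is the heart of the argument.

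Alternatively, and perhaps more cleanly, I would use the fact flagged in the text that when $n\geq 3$ the result also follows from the Lefschetz hyperplane theorem, but I would prefer a uniform argument. The cleanest global version is to observe that $K_X+D$ is big and nef while $-K_X$ is ample; adding, $D=(K_X+D)+(-K_X)$ would then be a sum of a nef and an ample class, hence ample, forcing $D$ to be an ample effective divisor. But the components $D_i$ are abelian varieties with anti-ample normal bundle, and an ample divisor cannot have negative self-intersection along curves inside it — concretely $D\cdot C<0$ for $C\subset D_i$ contradicts the ampleness of $D$. Thus no such $X$ exists.

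The main obstacle I anticipate is being careful about nonemptiness and about which positivity statement to invoke: one must confirm that $D\neq 0$ (guaranteed since $X^o$ is noncompact with at least one cusp) and that the anti-ampleness of the normal bundle indeed yields a curve $C\subset D$ with $D\cdot C<0$. Once those two facts are in hand, the contradiction is immediate and purely numerical, so I expect the proof to be very short. I would write it up using the second, global formulation, since it most transparently packages the tension between $-K_X$ ample and the boundary $D$ being ``negative.''
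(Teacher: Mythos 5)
Your proof is correct, but it takes a slightly different route from the paper's, and it is worth comparing the two. The paper argues as follows: by Theorem \ref{Mok}, $K_{X}+\alpha D$ is ample for $\alpha$ close to one; since $-K_{X}$ is ample, Corollary 4.18 of \cite{DiCerbo} then yields that $K_{X}+D$ is strictly nef (in essence this is the convex-combination identity $K_{X}+D=\tfrac{1}{\alpha}(K_{X}+\alpha D)+\tfrac{1-\alpha}{\alpha}(-K_{X})$, exhibiting $K_{X}+D$ as a positive sum of ample classes); this contradicts the adjunction-triviality $(K_{X}+D)_{|_{D}}=\oo_{D}$, which forces $(K_{X}+D)\cdot C=0$ on any curve $C$ inside the abelian boundary components. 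You instead place the contradiction on $D$ itself: writing $D=(K_{X}+D)+(-K_{X})$ as nef plus ample, $D$ would be ample, while the anti-ampleness of the normal bundle of each $D_{i}$ (Theorem 1 of \cite{Mok}, already quoted in the paper's proof of Theorem \ref{Mok}) gives $D\cdot C<0$ for any curve $C\subset D_{i}$; your first variant, $K_{X}\cdot C=(K_{X}+D)\cdot C-D\cdot C>0$ against $K_{X}\cdot C<0$, is the same clash phrased on $K_{X}$. Both proofs are short numerical contradictions on boundary curves, which exist because $\dim D_{i}=n-1\geq 1$ and $D\neq 0$ (at least one cusp) --- points you rightly flag as the only hypotheses to check. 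What your version buys: it is self-contained modulo facts stated in the paper, avoiding the external citation to \cite{DiCerbo}, and it uses only the nefness of $K_{X}+D$ (immediate from the statement of Theorem \ref{Mok}, since $K_{X}+D$ is a limit of the ample divisors $K_{X}+\alpha D$), not the full gap $\alpha\in\left(\frac{1}{3},1\right)$; moreover it proves slightly more, namely that $-K_{X}$ cannot even be nef, since then $D$ would be a sum of nef classes and $D\cdot C\geq 0$ would still contradict $D\cdot C<0$. What the paper's version buys: it bypasses the normal-bundle theorem entirely, resting only on the triviality of $(K_{X}+D)_{|_{D}}$ together with the strict-nefness criterion from the authors' earlier work.
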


\begin{proof}
Suppose $(X,D)$ is a toroidal compactification with $-K_{X}$ ample.
Because of Theorem \ref{Mok}, for all $\alpha$ close to one, we know
$K_{X}+\alpha D$ is ample. By Corollary 4.18 in \cite{DiCerbo}, it
follows that $K_{X}+D$ is strictly nef. On the other hand, we must
have $(K_{X}+D)_{|_{D}}=\oo_{D}$. This is a contradiction.
\end{proof}

\begin{question}
It is interesting to ask whether there exists a smooth toroidal
compactification of a ball quotient with negative Kodaira dimension.
\end{question}

\section{Applications}\label{maggiore}

In this section, we give the proofs of Theorems \ref{Mok1},
\ref{degree1} and \ref{3} stated in the Introduction.

\subsection{Effective birationality}

Let us start by studying the birational properties of the divisor
$K_{X}+D$. First, we prove that the map associated to $|m(K_{X}+D)|$
maps the components of $D$ to distinct points for any $m\geq 2$.

\begin{proposition}\label{exact}
Let $(X,D)$ be a toroidal compactification. Then for any $i$ there
exists a section $\sigma_{i}$ of $H^{0}(X,\oo_{X}(2(K_{X}+D)))$ such
that $\sigma_{i}|_{D_{i}}\neq 0$ and $\sigma_{i}|_{D_{j}}=0$ for all
$j\neq i$.
\end{proposition}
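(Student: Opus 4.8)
The goal is to produce, for each component $D_i$ of the boundary, a section of $2(K_X+D)$ that is nonzero on $D_i$ but vanishes identically on all the other components $D_j$. The natural strategy is to separate the problem into two pieces using the restriction sequence associated to the divisor $D_i$, and to exploit the fact established in the proof of Theorem \ref{Mok} that the normal bundle of each $D_i$ is anti-ample while $(K_X+D)|_{D_i}$ is trivial.

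The plan is to work with the short exact sequence
\begin{align}\notag
0\to \oo_X(2(K_X+D)-D_i)\to \oo_X(2(K_X+D))\to \oo_{D_i}(2(K_X+D))\to 0,
\end{align}
and to pass to the associated long exact sequence in cohomology. First I would observe that since $D_i$ is an abelian variety and the adjunction formula gives $(K_X+D)|_{D_i}=K_{D_i}+(D-D_i)|_{D_i}=\oo_{D_i}$ (using that $D_i$ is disjoint from the other $D_j$ so that $(D-D_i)|_{D_i}=0$ and $K_{D_i}=\oo_{D_i}$), the restriction $\oo_{D_i}(2(K_X+D))$ is the trivial bundle $\oo_{D_i}$, which has a nowhere-vanishing global section. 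Thus there is a canonical candidate section on $D_i$ which I want to lift to all of $X$; lifting it to a global section of $\oo_X(2(K_X+D))$ requires the surjectivity of the restriction map $H^0(X,\oo_X(2(K_X+D)))\to H^0(D_i,\oo_{D_i})$, which in turn follows from the vanishing of $H^1(X,\oo_X(2(K_X+D)-D_i))$.

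The key technical step, and the one I expect to be the main obstacle, is establishing this $H^1$-vanishing. I would rewrite the twisting divisor as
\begin{align}\notag
2(K_X+D)-D_i=K_X+\big(K_X+2D-D_i\big)=K_X+\big(K_X+D+(D-D_i)\big),
\end{align}
and aim to apply Kawamata–Viehweg vanishing. For this I need the divisor $K_X+2D-D_i-K_X=K_X+D+(D-D_i)$, i.e. the piece being added to $K_X$, to be big and nef (or to set up an appropriate $\qq$-divisor with klt boundary). Here Theorem \ref{Mok} is decisive: since $K_X+\alpha D$ is ample for $\alpha\in(\tfrac13,1)$ and $K_X+D$ is big and nef, the divisor $K_X+D+(D-D_i)$ can be shown to be big and nef by writing it as a sum of $K_X+D$ (nef and big) and $(D-D_i)$, handled together with the ampleness of $K_X+\alpha D$; the subtlety is controlling the contribution of $D-D_i$ so the total remains nef, which is exactly where the uniform ampleness range buys room. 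Once the relevant divisor is expressed as $K_X$ plus a big and nef $\qq$-divisor with simple normal crossing (hence klt) fractional part, Kawamata–Viehweg gives $H^1=0$.

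Having produced a global section $\tilde\sigma_i$ of $\oo_X(2(K_X+D))$ restricting to a nonzero (in fact nowhere-vanishing) section on $D_i$, the final step is to arrange vanishing on the other components. Since the $D_j$ are pairwise disjoint, $\tilde\sigma_i|_{D_j}$ lands in $H^0(D_j,\oo_{D_j})=\cc$, a constant; if this constant is already zero for all $j\neq i$ I am done, and otherwise I would correct $\tilde\sigma_i$ by subtracting sections supported on the remaining components. Concretely, by repeating the lifting construction I can produce for each $j$ a section restricting to the appropriate constant on $D_j$ and vanishing on $D_i$, and take a suitable linear combination; because the boundary components are disjoint these corrections do not interfere with one another on $D_i$. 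This yields the desired $\sigma_i$ with $\sigma_i|_{D_i}\neq 0$ and $\sigma_i|_{D_j}=0$ for all $j\neq i$, completing the proof.
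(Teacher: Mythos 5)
Your overall strategy (restriction sequence plus Kawamata--Viehweg plus lifting from the abelian boundary) is the right family of ideas, and your adjunction computation $(K_X+D)|_{D_i}=\oo_{D_i}$ is correct, but the key technical step fails as stated. The divisor $K_X+D+(D-D_i)$ is \emph{not} big and nef: for any curve $C\subseteq D_j$ with $j\neq i$ one has $(K_X+D)\cdot C=0$ (since $(K_X+D)|_{D_j}=\oo_{D_j}$) and $(D-D_i)\cdot C=D_j\cdot C<0$ because the normal bundle of $D_j$ is anti-ample, so the sum meets $C$ negatively. Theorem \ref{Mok} gives you room to \emph{decrease} the coefficient of $D$ below $1$, not to increase it above $1$, which is exactly what adding $D-D_i$ does; no klt $\qq$-divisor rearrangement can rescue this, since any candidate with round-up $2D-D_i$ has coefficient $>1$ along $D_j$ and is again negative on curves in $D_j$. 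In fact your desired vanishing is simply false: from the sequence
\bdism
0\to \oo_X(2K_X+D)\to \oo_X(2(K_X+D)-D_i)\to \oo_{D-D_i}\to 0
\edism
and Kawamata--Viehweg applied to $2K_X+D=K_X+(K_X+D)$ (killing $H^1$ and $H^2$ of the subsheaf), one gets $H^1(X,\oo_X(2(K_X+D)-D_i))\cong \bigoplus_{j\neq i}H^1(D_j,\oo_{D_j})\neq 0$ whenever $q\geq 2$, since each $D_j$ is an abelian variety of dimension $n-1\geq 1$. Your later ``correction'' step needs the same kind of unjustified lifts, so the gap propagates.

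The repair is to subtract the \emph{entire} boundary at once, which is what the paper does: use
\bdism
0\to \oo_X(2K_X+D)\stackrel{\cdot D}{\to} \oo_X(2(K_X+D))\to \oo_D\to 0,
\edism
where now the twisting divisor is $2(K_X+D)-D=K_X+(K_X+D)$ with $K_X+D$ big and nef, so Kawamata--Viehweg gives $H^1(X,\oo_X(2K_X+D))=0$ directly --- note this needs only bigness and nefness of $K_X+D$, not the full ampleness gap of Theorem \ref{Mok}. The resulting surjection $H^0(X,\oo_X(2(K_X+D)))\to \bigoplus_{j}H^0(D_j,\oo_{D_j})$ then produces, in one stroke, a section restricting to $1$ on $D_i$ and to $0$ on every $D_j$ with $j\neq i$, with no separate correction procedure required. (Your intermediate claims --- that a lift nonzero on $D_i$ exists --- happen to be true, but only because this stronger surjectivity holds; your proposed proof of them does not work.)
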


\begin{proof}
Write $D=\sum_{i=1}^{q}D_{i}$ and recall that each component is an
abelian variety and they are all disjoint. Consider the following
exact sequence \bdism 0\rightarrow \oo_{X}(2K_{X}+D)\stackrel{\cdot
D}{\rightarrow} \oo_{X}(2(K_{X}+D))\rightarrow \oo_{D}\rightarrow 0.
\edism By Kawamata-Viehweg's vanishing we have that
$H^{1}(X,\oo_{X}(2K_{X}+D))=0$. Thus, taking the long exact sequence
in cohomology, we get the following surjective map \bdism
H^{0}(X,\oo_{X}(2(K_{X}+D)))\rightarrow \bigoplus_{i=1}^{q}
H^{0}(D_{i},\oo_{D_{i}}). \edism
\end{proof}

The next step is to understand what happens outside the boundary
divisor. Let us start by deriving a lower bound on the top
self-intersection of $K_{X}+D$.

\begin{lemma}\label{gromov}
Let $(X,D)$ be a toroidal compactification. Then \bdism
(K_{X}+D)^{n}\geq (n+1)^{n-1}. \edism
\end{lemma}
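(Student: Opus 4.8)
The plan is to deduce the inequality from the Hirzebruch--Mumford proportionality principle in the non-compact case, due to Mumford \cite{Mumford}, which is exactly tailored to toroidal compactifications of ball quotients. The key point is that the \emph{logarithmic} Chern numbers of the pair $(X,D)$ --- equivalently the Chern numbers of Mumford's canonical extension of the log-tangent bundle $T_{X}(-\log D)$ --- are proportional to the corresponding Chern numbers of the compact dual $\pp^{n}$ of $\mathcal{H}^{n}$, with one and the same proportionality constant. I would apply this to only two of them: the top power of $\overline{c}_{1}$ and the top logarithmic Chern class $\overline{c}_{n}$.

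First I would record the data on the compact dual. On $\pp^{n}$ one has $c_{1}=(n+1)H$, so $c_{1}^{n}=(n+1)^{n}$, while $c_{n}(\pp^{n})=e(\pp^{n})=n+1$; hence proportionality gives $\overline{c}_{1}(X,D)^{n}/\overline{c}_{n}(X,D)=(n+1)^{n}/(n+1)=(n+1)^{n-1}$. Next I would identify the two quantities on the ball-quotient side. By construction $\overline{c}_{1}(X,D)=-(K_{X}+D)$, so $\overline{c}_{1}(X,D)^{n}=(-1)^{n}(K_{X}+D)^{n}$; and $\overline{c}_{n}(X,D)$ equals the Euler characteristic $e(X^{o})$ of the open manifold $X^{o}=\mathcal{H}^{n}/\Gamma$, since the components $D_{i}$ are abelian varieties of dimension $n-1\geq 1$ and thus have $e(D_{i})=0$. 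Combining, $(K_{X}+D)^{n}=(-1)^{n}(n+1)^{n-1}e(X^{o})$.

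It then remains to show that $(-1)^{n}e(X^{o})$ is a positive integer, whence it is at least $1$ and the lemma follows. It is an integer because $X^{o}$ is diffeomorphic to the interior of a compact manifold with boundary and so has the homotopy type of a finite complex. It is positive because the Gauss--Bonnet integrand of the complete, finite-volume K\"ahler--Einstein metric $\hat{\omega}$ of constant holomorphic sectional curvature $-1$ is pointwise $(-1)^{n}$ times a positive multiple of the volume form --- the curvature tensor of a complex space form is pointwise the model one --- so that $(-1)^{n}e(X^{o})=(-1)^{n}\int_{X^{o}}\overline{c}_{n}>0$.

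The main obstacle, and the reason the non-compact proportionality theorem is needed rather than a one-line Chern--Weil computation, is the convergence and correct interpretation of these integrals near the cusps: one must know that $\int_{X^{o}}\hat{c}_{1}^{n}$ equals the honest intersection number $(K_{X}+D)^{n}$ on the compactification, and that $\int_{X^{o}}\hat{c}_{n}$ computes $e(X^{o})$, i.e. that the boundary contributions coming from $D$ vanish. This is precisely the content of Mumford's analysis of the canonical extension on toroidal compactifications, and since the construction of $(X,D)$ here follows Hummel--Schroeder and Mok it applies without change, so I would simply cite it. As a consistency check one can compute directly, using $(K_{X}+D)|_{D_{i}}=\oo_{D_{i}}$ and the anti-ampleness of $-D_{i}|_{D_{i}}$, that $(K_{X}+D)^{n}=K_{X}^{n}+\sum_{i}(-D_{i}|_{D_{i}})^{n-1}$ with each summand a positive integer; but this identity alone does not control $K_{X}^{n}$ and so does not reach $(n+1)^{n-1}$, which confirms that the proportionality principle is the essential ingredient.
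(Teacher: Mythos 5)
Your proposal is correct, but it takes a genuinely different route from the paper's. The paper's proof is a two-line volume computation: since the K\"ahler--Einstein current $\hat{\omega}\in[K_{X}+D]$ has zero Lelong numbers, it can be self-intersected, giving $\Vol(X^{o})=\frac{(4\pi)^{n}}{n!(n+1)^{n}}(K_{X}+D)^{n}$, and this is combined with the Gromov--Harder generalization of Gauss--Bonnet \cite{Gro1}, which yields the volume lower bound $\Vol(X^{o})\geq\frac{(4\pi)^{n}}{(n+1)!}$. You instead take the ratio of two log-Chern numbers via the Hirzebruch--Mumford proportionality principle \cite{Mumford}, identify $\overline{c}_{1}(X,D)=-(K_{X}+D)$ and $\overline{c}_{n}(X,D)=e(X^{o})$ (correctly using $e(D_{i})=0$ for abelian varieties), and then prove $(-1)^{n}e(X^{o})\geq 1$. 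At bottom the two arguments are cousins: both amount to $(K_{X}+D)^{n}=(n+1)^{n-1}\left|\chi(X^{o})\right|$ together with $\left|\chi(X^{o})\right|\geq 1$; in the paper's version the Euler characteristic is hidden inside Gromov's volume inequality, while in yours the volume is hidden inside the proportionality constant, which cancels in the ratio. What your route buys is strictly more than the lemma: the exact equality with an integrality statement, and proportionality of all log-Chern numbers. What it costs is the one soft spot you yourself flag: Mumford's theorem is proved in \cite{Mumford} for neat \emph{arithmetic} lattices, whereas this paper works deliberately in the possibly non-arithmetic setting of Hummel--Schroeder and Mok; your justification (Mumford's analysis of good extensions is local at the cusps, where the geometry of the compactification is identical) is the right one, but in a complete write-up it should be an explicit citation-backed step rather than an aside, precisely because the paper's own tool --- Gauss--Bonnet for complete finite-volume manifolds of pinched negative curvature --- is purely metric and requires no arithmeticity at all. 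Your sign bookkeeping, the positivity of the Gauss--Bonnet integrand (the curvature tensor being the model one), and the consistency identity $(K_{X}+D)^{n}=K_{X}^{n}+\sum_{i}\left(-D_{i}|_{D_{i}}\right)^{n-1}$ all check out.
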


\begin{proof}

Since the K\"ahler-Einstein current $\hat{\omega}$ can be multiplied
by itself, the top self-intersection of $L:=K_{X}+D$ can be
expressed in terms of the Riemannian volume of $X^{o}$. More
precisely, by normalizing the holomorphic sectional curvature to be
$-1$ we have
\begin{align}\notag
\Vol(X)=\frac{(4 \pi)^{n}}{n!(n+1)^{n}} L^{n}.
\end{align}
On the other hand, Gromov-Harder's generalization of Gauss-Bonnet
\cite{Gro1} implies that
\begin{align}\notag
\Vol(X)\geq \frac{(4\pi)^{n}}{(n+1)!}.
\end{align}
Combining the two formulas above we get the result.

\end{proof}

These considerations and a theorem of Koll\'ar in \cite{Kol} imply
the following.

\begin{corollary}
Let $(X,D)$ be a toroidal compactification. Then $m(K_{X}+D)$ is
base point free for any $m\geq \binom{n}{2}+1$.
\end{corollary}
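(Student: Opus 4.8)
The plan is to apply Koll\'ar's effective base-point-freeness theorem to the big and nef divisor $K_X+D$, using the lower bound on $(K_X+D)^n$ from Lemma~\ref{gromov} to control the required multiple. Koll\'ar's theorem (in \cite{Kol}) guarantees that for a nef and big divisor $L$ on a smooth projective variety of dimension $n$, the linear system $|m(K_X+L')|$ is base-point free once $m$ is large relative to intersection numbers of $L'$ with subvarieties; in the present log setting one applies the version adapted to $K_X+D$ where $D$ is reduced with simple normal crossing support. So the first step is to record that $(X,D)$ satisfies the hypotheses of Koll\'ar's theorem: $X$ is smooth projective, $K_X+D$ is big and nef (established in the proof of Theorem~\ref{Mok}), and $D$ has the required normal-crossing structure.

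Next I would extract the numerical input that makes the bound effective. The key point is that Koll\'ar's effective statement typically requires a quantity of the form $\binom{n}{2}+1$ together with the positivity of the relevant intersection numbers, and what one needs to verify is that $(K_X+D)^{\dim(Z)}\cdot Z>0$ for every subvariety $Z$. But this is exactly what was established at the end of the proof of Theorem~\ref{Mok}: we showed $(K_X+D)^{\dim(Z)}\cdot Z>0$ for any subvariety $Z$ not contained in $D$, and for curves (and higher-dimensional subvarieties) inside $D$ the behavior of $K_X+D$ restricted to the abelian components $D_i$ must be handled separately, using that $(K_X+D)|_{D_i}=\oo_{D_i}$. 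The clean way forward is to invoke Koll\'ar's theorem in the form that produces the universal bound $m\geq\binom{n}{2}+1$ once $K_X+D$ is nef and big, reading off $\binom{n}{2}+1$ directly from the statement in \cite{Kol} specialized to dimension $n$.

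I expect the main obstacle to be matching the exact hypotheses of Koll\'ar's theorem to the present situation, since his effective base-point-freeness statement is usually phrased for $\oo_X(mK_X + L)$ with $L$ nef and big and with a numerical condition tied to the dimension. Here the divisor is $K_X+D$ rather than $K_X+L$ with $L$ ample, so one must confirm that the log-canonical divisor fits the framework — this is legitimate because $D$ is reduced with SNC support and $K_X+D$ is big and nef, so the pair $(X,D)$ is log canonical and the vanishing theorems underlying Koll\'ar's argument (Kawamata–Viehweg type, already used in Proposition~\ref{exact}) apply. Once the hypotheses are verified, the conclusion $m(K_X+D)$ base-point free for $m\geq\binom{n}{2}+1$ follows immediately from the cited theorem, with the lower bound of Lemma~\ref{gromov} ensuring that no degenerate case (where the intersection numbers could vanish off $D$) arises.

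The proof is therefore short: cite Koll\'ar's theorem, check that $K_X+D$ is big and nef with strictly positive intersection against all subvarieties not contained in $D$ (from Theorem~\ref{Mok}'s proof), and read off the numerical bound $\binom{n}{2}+1$ from \cite{Kol}.
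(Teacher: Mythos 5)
Your strategy coincides with the paper's for points away from the boundary: both apply Koll\'ar's effective freeness theorem (Theorem 5.8 in \cite{Kol}) to $K_X+D$, using the positivity $(K_X+D)^{\dim Z}\cdot Z>0$ for every subvariety $Z$ not contained in $D$, which was established in the proof of Theorem \ref{Mok}. The genuine gap in your proposal is at the points of $D$ itself. Koll\'ar's theorem cannot give freeness at a point $p\in D_i$, because its numerical hypothesis must hold for \emph{every} subvariety through $p$, and $Z=D_i$ violates it: since $(K_X+D)|_{D_i}=\oo_{D_i}$, one has $(K_X+D)^{n-1}\cdot D_i=0$, and likewise every positive-dimensional subvariety of $D_i$ has vanishing intersection with $K_X+D$. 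You correctly note that subvarieties inside $D$ ``must be handled separately, using that $(K_X+D)|_{D_i}=\oo_{D_i}$,'' but you never carry this out, and your conclusion that the statement ``follows immediately from the cited theorem'' is false precisely at the base locus question along $D$ --- no form of Koll\'ar's theorem covers those points.

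The paper closes this gap with Proposition \ref{exact}. The short exact sequence
$0\to\oo_X(m(K_X+D)-D)\to\oo_X(m(K_X+D))\to\oo_D\to 0$
together with Kawamata--Viehweg vanishing applied to $m(K_X+D)-D=K_X+(m-1)(K_X+D)$, which is of the form canonical plus big and nef for $m\geq 2$, yields the surjectivity of the restriction map $H^0(X,\oo_X(m(K_X+D)))\to\bigoplus_i H^0(D_i,\oo_{D_i})$. Since $m(K_X+D)$ restricts trivially to each abelian component $D_i$, a section restricting to a nonzero constant on $D_i$ is nowhere vanishing there, so $m(K_X+D)$ is free along $D$ for every $m\geq 2$; combined with the Koll\'ar step off $D$ this finishes the proof. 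You need this (or an equivalent) argument to be supplied. A smaller inaccuracy: Lemma \ref{gromov} plays no role in this corollary --- since $K_X+D$ is an integral divisor, the strict positivity $(K_X+D)^{\dim Z}\cdot Z>0$ already gives the integer bound $\geq 1$ that feeds Koll\'ar's hypotheses; the paper invokes Lemma \ref{gromov} for separation of points (via Theorem 5.9 of \cite{Kol}) and for Proposition \ref{matsusaka}, not here.
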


\begin{proof}

By Theorem \ref{Mok}, for any subvariety $Z$ not contained in $D$ we
have $(K_{X}+D)^{\dim(Z)}\cdot Z>0$. Because of Theorem 5.8 in
\cite{Kol}, we know that $m(K_{X}+D)$ is free at all points outside
$D$ for any $m\geq \binom{n}{2}+1$. Moreover, by Lemma \ref{exact}
we know already that $2(K_{X}+D)$ is free on the divisor $D$.

\end{proof}

Similarly, we can study separation of points.

\begin{corollary}\label{separation}
Let $(X,D)$ be a toroidal compactification. Then $m(K_{X}+D)$
separates any two points in $X\backslash D$ for any $m\geq
\binom{n}{2}+2$.
\end{corollary}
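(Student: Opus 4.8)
The plan is to invoke the same theorem of Kollár used in the previous corollary, but now in its point-separation incarnation. Specifically, Theorem 5.8 in \cite{Kol} (and the effective freeness/very-ampleness package surrounding it) gives a numerical criterion, in terms of a bound like $\binom{n}{2}+2$, under which a multiple of an adjoint-type divisor separates distinct points, provided a suitable positivity hypothesis holds along all subvarieties through those points. The essential input is exactly what Theorem \ref{Mok} supplies: for any subvariety $Z$ not contained in $D$ we have $(K_X+D)^{\dim(Z)}\cdot Z > 0$, so the divisor $K_X+D$ behaves like a strictly positive (big and nef) class away from the boundary. This is the hypothesis Kollár's criterion needs, and it is why the bound for separation is just one more than the bound for base-point freeness.

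First I would fix two distinct points $x, y \in X \setminus D$ and split into cases according to where they lie relative to $D$. The case of points $x, y \in X \setminus D$ is handled directly by Kollár's separation statement: the positivity $(K_X+D)^{\dim(Z)}\cdot Z > 0$ established in the proof of Theorem \ref{Mok} feeds into Theorem 5.8 of \cite{Kol} to guarantee that $m(K_X+D)$ separates $x$ and $y$ once $m \geq \binom{n}{2}+2$. Since the statement only concerns pairs of points in $X\setminus D$, this is in fact the whole content, and the interaction with $D$ only enters through the global positivity hypothesis rather than requiring separate treatment of boundary points.

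The verification I would be careful about is that Kollár's theorem applies uniformly for every pair of points in the open part, i.e. that the positivity hypothesis $(K_X+D)^{\dim(Z)}\cdot Z > 0$ holds for \emph{all} subvarieties $Z \not\subseteq D$ passing through $x$ or $y$, not merely generically. This is already guaranteed by the strengthened conclusion of Theorem \ref{Mok}, where it is recorded that $(K_X+D)^{\dim(Z)}\cdot Z > 0$ for any subvariety $Z$ not contained in $D$; hence the hypothesis is available for every relevant $Z$ at once.

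The main obstacle, and the step I would scrutinize most, is matching the exact numerical form of the bound $\binom{n}{2}+2$ to the precise statement of Kollár's effective separation result, since effective very-ampleness and point-separation bounds in the literature typically carry an additional additive constant (here $+2$ rather than $+1$) coming from the requirement of separating two points simultaneously rather than being free at a single point. I would therefore state the corollary as a direct application of Theorem 5.8 in \cite{Kol}, combined with the global positivity from Theorem \ref{Mok}, and the $+1$ increment over the base-point-free bound reflects precisely the passage from single-point freeness to two-point separation in Kollár's estimate.
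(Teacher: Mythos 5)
Your proposal has a genuine gap, and it is concentrated exactly where you say you would ``scrutinize most'' but then wave through. First, a citation issue that is symptomatic of the problem: the separation statement in \cite{Kol} is Theorem 5.9, not Theorem 5.8 (which is the freeness criterion used for the previous corollary). The paper's proof is precisely ``combine Theorem 5.9 in \cite{Kol} and Lemma \ref{gromov}'', and the appearance of Lemma \ref{gromov} is not decorative. Kollár's two-point separation theorem carries strictly stronger numerical hypotheses than the freeness theorem --- for subvarieties through both points, and in particular at the top-dimensional level $Z=X$, the required lower bound on $(N^{\dim Z}\cdot Z)$ is larger than in the one-point case. Your argument feeds in only the positivity $(K_X+D)^{\dim Z}\cdot Z>0$ from Theorem \ref{Mok}, i.e.\ the integrality bound $(K_X+D)^{\dim Z}\cdot Z\geq 1$, and asserts that replacing $\binom{n}{2}+1$ by $\binom{n}{2}+2$ ``reflects precisely the passage from single-point freeness to two-point separation.'' That is a heuristic, not a verification: with only $(K_X+D)^n\geq 1$, an additive increment of $1$ in $m$ does not in general absorb the multiplicative strengthening in the separation hypotheses (for instance, a factor of $2$ in the top-dimensional condition would force $m$ to grow roughly like $2^{1/n}\binom{n}{2}$, which eventually exceeds $\binom{n}{2}+2$). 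The paper closes exactly this hole with Lemma \ref{gromov}: the Gromov--Harder volume estimate gives $(K_X+D)^n\geq (n+1)^{n-1}$, which is far more than integrality provides and comfortably satisfies the stronger top-dimensional requirement in Theorem 5.9 for every $m\geq\binom{n}{2}+2$.

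So the concrete missing idea is the use of the volume lower bound of Lemma \ref{gromov} --- an input you explicitly declare unnecessary (``the interaction with $D$ only enters through the global positivity hypothesis''). Your reduction to two points of $X\setminus D$ and your remark that the positivity of Theorem \ref{Mok} holds for \emph{all} subvarieties $Z\not\subseteq D$, not just generic ones, are both correct and consistent with the paper; but without Lemma \ref{gromov} (or an explicit check that the numerical hypotheses of Theorem 5.9 hold with $m=\binom{n}{2}+2$ using integrality alone, which you do not carry out and which is doubtful as stated) the proof is incomplete.
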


\begin{proof}
Combine Theorem 5.9 in \cite{Kol} and Lemma \ref{gromov}.
\end{proof}

Since $X\backslash D$ is open in $X$, Corollary \ref{separation}
implies that $|m(K_{X}+D)|$ defines a birational map for any $m\geq
\binom{n}{2}+2$. These simple results can be already used to
slightly improve all the bounds in \cite{Hwa2}. Nevertheless, the
approach we follow here is quite different. Note that Corollary
\ref{separation} deals only with separation of points. Thus, if we
want to prove that $m(K_{X}+D)$ defines an embedding outside $D$ we
need something more. Again the key for us is Theorem \ref{Mok},
while the approach described in \cite{Hwa2} relies on Seshadri
constants type arguments. Let us start by recalling the following
definition.

\begin{definition}
Let $X$ be a smooth projective variety and let $D$ be an effective
divisor. We say that a divisor $L$ is very ample modulo $D$ if the
map $\phi_{L}:X \dashrightarrow \pp(H^{0}(X,\oo_{X}(L)))$ defines an
embedding of $X\backslash D$.
\end{definition}

We can now derive an effective result on very ampleness modulo $D$
of $K_{X}+D$. In order to keep the final statement simple, some of
the constants used in the proof are not optimal.

\begin{theorem}\label{veryample}
Let $(X,D)$ be a toroidal compactification of dimension $n$. Then
$m(K_{X}+D)$ is very ample modulo $D$ for any $m\geq 2(n+1)^{3}$.
\end{theorem}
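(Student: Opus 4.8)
The plan is to establish very ampleness modulo $D$ by checking, on the open set $X\setminus D$, the two infinitesimal conditions that together characterize a closed embedding: that $m(K_{X}+D)$ is a morphism there, separates points of $X\setminus D$, and separates tangent vectors at each point of $X\setminus D$. Writing $L=K_{X}+D$, the tangent-vector condition at a point $x$ is the surjectivity of the restriction $H^{0}(X,\oo_{X}(mL))\rightarrow \oo_{X}(mL)\otimes(\oo_{X,x}/\mathfrak{m}_{x}^{2})$, equivalently the vanishing $H^{1}(X,\oo_{X}(mL)\otimes\mathfrak{m}_{x}^{2})=0$. First I would note that the map is already a morphism on $X\setminus D$: the base-point-freeness corollary proved above gives that $m(K_{X}+D)$ is free away from $D$ for $m\geq\binom{n}{2}+1$.

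Separation of points is essentially in hand. Corollary \ref{separation} provides it at level $\binom{n}{2}+2$, and it propagates to every $m\geq 2(n+1)^{3}$ by the elementary product argument: if $aL$ separates a pair of points and $bL$ is base point free, then $(a+b)L$ separates that pair, and $(m-\binom{n}{2}-2)L$ is base point free whenever $m\geq 2\binom{n}{2}+3$, which is comfortably below $2(n+1)^{3}$ for $n\geq 2$. Thus for all $m$ in the stated range the map is injective on $X\setminus D$, and the entire content of the theorem is pushed onto the separation of tangent vectors.

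This is the heart of the matter and the step where I expect the main difficulty, since point separation alone does not yield an immersion: one must control the non-reduced length-two scheme at $x$, i.e.\ obtain the Nadel-type vanishing $H^{1}(X,\oo_{X}(mL)\otimes\mathfrak{m}_{x}^{2})=0$. I would proceed exactly in the spirit of Theorems 5.8 and 5.9 of \cite{Kol} used above, now in their $1$-jet form: construct an effective $\qq$-divisor numerically proportional to a small multiple of $L$ with an isolated log-canonical center at $x$ of multiplicity exceeding $n+1$, and apply Kawamata--Viehweg--Nadel vanishing. The hypotheses for this construction are supplied entirely by results already proved, namely the positivity $(K_{X}+D)^{\dim Z}\cdot Z>0$ for every $Z\not\subseteq D$ from Theorem \ref{Mok}, together with the volume bound $(K_{X}+D)^{n}\geq (n+1)^{n-1}$ from Lemma \ref{gromov}. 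It is precisely this that lets one avoid the Seshadri-constant estimates of \cite{Hwa2}: Koll\'ar's inductive construction requires positivity only on the minimal log-canonical center it produces, a volume-type quantity we control, rather than uniform positivity along every curve through $x$. The extra order of vanishing --- multiplicity $>n+1$ rather than the $>n$ sufficient for base point freeness --- is what raises the dimensional cost from the quadratic $\binom{n}{2}$ to the cubic $2(n+1)^{3}$, the constant being deliberately non-optimal to keep the statement clean.

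The delicate points, and the reason the conclusion is phrased \emph{modulo} $D$, are twofold. First, one must arrange the auxiliary divisor so that its minimal log-canonical center is the single point $x$ rather than a positive-dimensional subvariety; this is a standard tie-breaking and general-position argument, but it must be carried out uniformly in $x$. Second, the positivity furnished by Theorem \ref{Mok} degenerates along the boundary --- indeed $K_{X}+D$ restricts trivially to each abelian component of $D$ --- so whenever the center produced by the induction is positive-dimensional we must know it is not contained in $D$ in order to invoke $(K_{X}+D)^{\dim Z}\cdot Z>0$. Restricting to $x\in X\setminus D$ guarantees this, and is exactly what makes the required vanishing, and hence the immersion, hold off the boundary. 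Combining the point separation of the second paragraph with this $1$-jet separation then yields an embedding of $X\setminus D$ for every $m\geq 2(n+1)^{3}$.
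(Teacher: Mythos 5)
Your proof has a genuine gap at exactly the step you yourself flag as ``the heart of the matter'': the separation of tangent vectors. You invoke a ``$1$-jet form'' of Theorems 5.8 and 5.9 of \cite{Kol} as though it were available off the shelf, but no such statement exists in that reference, and the paper explicitly warns about this: right after Corollary \ref{separation} the authors note that it ``deals only with separation of points'' and that proving an embedding outside $D$ ``needs something more.'' The difficulty is structural, not a matter of constants. The Angehrn--Siu/Koll\'ar induction produces an effective $\qq$-divisor $\Delta$, numerically a small multiple of the big and nef divisor, whose \emph{minimal log-canonical center} is $\{x\}$; Nadel vanishing then gives surjectivity onto $\oo_{X}/\mathcal{J}(\Delta)$ with $\mathcal{J}(\Delta)\subseteq\mathfrak{m}_{x}$, i.e.\ freeness, and a two-point variant gives point separation. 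To separate tangent vectors you need $\mathcal{J}(\Delta)\subseteq\mathfrak{m}_{x}^{2}$, and starting with $\mathrm{mult}_{x}\Delta>n+1$ does \emph{not} secure this: the inductive cutting-down step, which perturbs $\Delta$ to lower the dimension of the minimal center through $x$, destroys control of the multiplicity at $x$, and maintaining simultaneously an isolated center at $x$ and multiplicity $>n+1$ there is precisely the open difficulty that separates effective \emph{very ampleness} of adjoint bundles from effective freeness. This is why the paper --- like \cite{Hwa2}, and like the $n^{7/3}$ variant via \cite{Heier} that the paper mentions --- routes very ampleness through Castelnuovo--Mumford regularity rather than through jets of multiplier ideals.

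The paper's actual proof is much more elementary and uses Theorem \ref{Mok} in a way your argument misses entirely: the gap theorem makes $H:=2K_{X}+D$ an \emph{integral} ample divisor, Angehrn--Siu \cite{Ang} makes $B:=K_{X}+(n^{2}+1)H$ ample and base point free, Castelnuovo--Mumford regularity (Example 1.8.23 in \cite{Laz1}) makes $A:=K_{X}+(n+2)B$ very ample on \emph{all} of $X$, and since $A+kD=M(K_{X}+D)$ for explicit positive integers $k,M$ with $M\leq 2(n+1)^{3}$, multiplication by $s_{D}^{k}$ injects $H^{0}(X,\oo_{X}(A))$ into $H^{0}(X,\oo_{X}(M(K_{X}+D)))$; the subsystem already embeds $X\setminus D$, with no jet analysis needed. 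This route also settles a point your argument leaves open even granting your vanishing: injectivity plus immersivity on $X\setminus D$ does not by itself prevent a point of $D$ from mapping to the image of a point of $X\setminus D$, whereas having the honestly very ample $A$ (together with sections vanishing on $D$ but not at a given interior point) does. Finally, your diagnosis of the constant is a misreading that should have been a warning sign: the cubic bound $2(n+1)^{3}$ does not come from raising the jet multiplicity from $>n$ to $>n+1$ --- that would cost only a quadratic amount --- but from the product $(n^{2}+1)(n+2)$ of the Angehrn--Siu freeness threshold with the Castelnuovo--Mumford factor; had your scheme worked, it would have yielded a quadratic bound, improving the paper's theorem by an order of magnitude.
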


\begin{proof}
Theorem \ref{Mok} implies that $K_{X}+\frac{1}{2}D$ is ample. Then
$H:=2K_{X}+D$ is an ample integral divisor. By a theorem of
Angehrn-Siu we have that $K_{X}+mH$ is ample and base point free for
any $m> \binom{n+1}{2}$, see \cite{Ang} or \cite{Kol} and
\cite{Laz2} for an algebraic proof. In particular
$B:=K_{X}+(n^{2}+1)H$ is ample and base point free. A corollary of
Castelnuovo-Mumford regularity, see Example 1.8.23 in \cite{Laz1},
gives that $K_{X}+(n+2)B$ is very ample. All together we have that
the divisor \bdism
\left(2(n^{2}+1)(n+2)+n+3\right)K_{X}+(n^{2}+1)(n+2)D \edism is very
ample. Let $M:=\left(2(n^{2}+1)(n+2)+n+3\right)$. Adding the right
positive multiple of $D$ we get the following injective map \bdism
H^{0}(X,\oo_{X}(K_{X}+(n+2)B))\hookrightarrow
H^{0}(X,\oo_{X}(M(K_{X}+D))). \edism This implies that $m(K_{X}+D)$
is very ample modulo $D$ for any $m\geq M$. Since $2(n+1)^{3}\geq M$
we get the statement of the theorem.
\end{proof}

Theorem \ref{Mok1} stated in the introduction is now just a
combination of Theorem \ref{veryample} and Proposition \ref{exact}.
\vspace{0.5cm}

Let us conclude this section by discussing the connections between
Fujita's conjecture and Theorem \ref{veryample}. First, recall that
if $\dim(X)\leq 4$ then the base point free part of Fujita's
conjecture is known to be true thanks to the work of Kawamata
\cite{Kawamata}. We can then improve the bound in Theorem
\ref{veryample} for low dimensional varieties.

\begin{corollary}
Let $(X,D)$ be a toroidal compactification of dimension $n\leq 4$.
Then $m(K_{X}+D)$ is very ample modulo $D$ for any $m\geq
2(n+2)^{2}$.
\end{corollary}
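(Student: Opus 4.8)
The plan is to re-run the proof of Theorem \ref{veryample}, replacing the single place where the Angehrn--Siu theorem is invoked by the stronger Fujita-type freeness bound, which by Kawamata \cite{Kawamata} holds unconditionally once $n\leq 4$. As in Theorem \ref{veryample}, Theorem \ref{Mok} tells us that $K_{X}+\frac{1}{2}D$ is ample, so $H:=2K_{X}+D$ is an ample integral divisor. The only conjectural input will be the base point free part of Fujita's conjecture; the final very ampleness step will continue to rely on Castelnuovo--Mumford regularity, which is unconditional.

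First I would apply the base point free part of Fujita's conjecture to the ample divisor $H$: for $n\leq 4$ it gives that $B:=K_{X}+(n+1)H$ is base point free. This is the crucial gain, since in Theorem \ref{veryample} the Angehrn--Siu bound forced the much larger coefficient $n^{2}+1$ in place of $n+1$. I would then verify that $B$ is moreover ample, so that it is legitimate to feed it into the regularity argument exactly as before: writing $B=(2n+3)K_{X}+(n+1)D=(2n+3)\big(K_{X}+\tfrac{n+1}{2n+3}D\big)$ and observing that $\tfrac{n+1}{2n+3}\in\big(\tfrac{1}{3},1\big)$ for every $n\geq 1$, the ampleness of $B$ is immediate from Theorem \ref{Mok}.

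Next, exactly as in the proof of Theorem \ref{veryample}, Castelnuovo--Mumford regularity (Example 1.8.23 in \cite{Laz1}) yields that $K_{X}+(n+2)B$ is very ample; note that $B$ has the same properties (ample and base point free) used there, so the step applies verbatim. Substituting $B=K_{X}+(n+1)H$ and $H=2K_{X}+D$ and collecting terms, this divisor equals $M'K_{X}+(n+1)(n+2)D$ with $M'=2(n+1)(n+2)+(n+3)$. Since $M'>(n+1)(n+2)$, adding the effective divisor $\big(M'-(n+1)(n+2)\big)D$ produces an injection $H^{0}(X,\oo_{X}(K_{X}+(n+2)B))\hookrightarrow H^{0}(X,\oo_{X}(M'(K_{X}+D)))$, whence $m(K_{X}+D)$ is very ample modulo $D$ for every $m\geq M'$.

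Finally I would reduce $M'$ to the stated form by the elementary estimate $M'=2n^{2}+7n+7\leq 2n^{2}+8n+8=2(n+2)^{2}$, valid for all $n\geq 0$, which gives very ampleness modulo $D$ for all $m\geq 2(n+2)^{2}$. The only delicate point in the whole argument is checking that the Fujita bound may legitimately replace Angehrn--Siu without breaking the downstream steps: concretely, that the smaller twist $K_{X}+(n+1)H$ is still ample (needed for the regularity step) and that the resulting coefficient of $K_{X}$ still dominates that of $D$ (needed for the section injection). Both are immediate from Theorem \ref{Mok} and the arithmetic above, so I expect no serious obstacle beyond this bookkeeping.
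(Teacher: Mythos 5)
Your proposal is correct and is exactly the paper's argument: the paper proves this corollary by rerunning the proof of Theorem \ref{veryample} with Kawamata's solution of the base point free part of Fujita's conjecture in dimension $\leq 4$ replacing Angehrn--Siu, leaving everything else unchanged. Your bookkeeping (the ampleness check $\frac{n+1}{2n+3}>\frac{1}{3}$ via Theorem \ref{Mok}, the Castelnuovo--Mumford step, and $M'=2n^{2}+7n+7\leq 2(n+2)^{2}$) faithfully fills in the details the paper leaves implicit.
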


\begin{proof}
It follows along the same lines of Theorem \ref{veryample}. Instead
of using Angehrn-Siu's theorem, we use Kawamata's result on Fujita's
conjecture. The rest remains unchanged.
\end{proof}

It is interesting to note that using a theorem of G. Heier in
\cite{Heier}, we can get a bound of order $n^{7/3}$ in Theorem
\ref{veryample}. Of course an even better bound would be obtained
using the very ampleness part of Fujita's conjecture. More
precisely, assuming the conjecture of Fujita to be true, the proof
of Theorem \ref{veryample} easily gives that $m(K_{X}+D)$ is very
ample modulo $D$ for any $m\geq 2(n+2)$.

\subsection{Bounds on the number of cusps}\label{counting}

In this section, we show how to explicitly estimate the number of
cusps of $X^{o}$ in terms of the intersection number
$(K_{X}+D)^{n}$. We present two different approaches. The first one
is a direct corollary of Theorem \ref{Mok}, while the second bound
follows easily from a result of Matsusaka. Moreover, it gives the
best currently known bound for threefolds.

Recall that $X^{o}$ has finitely many ends which correspond to the
cusp points of $X^{*}$. Each cusp point gives rise to one component
of the boundary divisor $D$ in $X$. So bounding the number of cusps
is equivalent to bounding the number of components of $D$.

We can now prove Theorem \ref{3} stated in the Introduction.

\begin{theorem}[Theorem \ref{3}]
Let $(X,D)$ be a toroidal compactification. Let $q$ be the number of
components of $D$. Then \bdism q\leq
\left(\frac{3}{2}\right)^{n}(K_{X}+D)^{n}. \edism
\end{theorem}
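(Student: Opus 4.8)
The goal is to bound the number $q$ of components of $D$ in terms of $(K_X+D)^n$, and the natural strategy is to exploit the very strong positivity established in Theorem \ref{Mok}: the divisor $K_X + \alpha D$ is ample for every $\alpha \in (\tfrac{1}{3},1)$. The plan is to produce, for each component $D_i$, a positive contribution to a suitable intersection number, and then sum over $i$ to get $q$ times a quantity that can be compared with $(K_X+D)^n$. Since the $D_i$ are pairwise disjoint abelian varieties, restricting $K_X + \alpha D$ to a single $D_i$ isolates that component, and this disjointness is what should convert a global intersection number into a clean sum of $q$ equal terms.

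First I would fix a convenient value of $\alpha$ in the ampleness range, say $\alpha$ slightly larger than $\tfrac{1}{3}$, and consider the ample class $L_\alpha := K_X + \alpha D$. The idea is to expand $(K_X + D)^n$ or a related top intersection in a way that sees each boundary component. Writing $K_X + D = L_\alpha + (1-\alpha)D$ and using that the components are disjoint, I would try to show that each $D_i$ forces $(K_X+D)^n$ to exceed a fixed positive amount. Concretely, since $D_i$ is a smooth abelian variety with anti-ample normal bundle, adjunction gives $(K_X + D)|_{D_i} = (K_{D_i} + (\text{normal contribution})) = 0$ on the abelian variety (its canonical class is trivial), so the boundary contributes in a controlled way; the positivity of $K_X + \alpha D$ restricted to $D_i$ then measures $-(1-\alpha)(D|_{D_i})$, i.e. the anti-ampleness of the normal bundle. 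The main computation is to bound the self-intersection numbers $(K_X+D)^{n-1}\cdot D_i$ from below by a universal constant (one expects something like $(n+1)^{n-1}$ or a comparable factor), using that $L_\alpha|_{D_i}$ is ample on an abelian $(n-1)$-fold together with the elementary hyperbolic-geometry fact, already used repeatedly above, that a rational curve must meet $D$ at least three times.

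I expect the main obstacle to be making the numerical constant $(\tfrac{3}{2})^n$ come out exactly, rather than merely obtaining some bound. The factor $\tfrac{3}{2}$ strongly suggests taking $\alpha = \tfrac{1}{3}$ as a limit, so that $1 - \alpha = \tfrac{2}{3}$ and its reciprocal $\tfrac{3}{2}$ appears when one divides by $(1-\alpha)$ after isolating a boundary contribution. The plan is therefore to write
\begin{align}\notag
(K_X+D)^n = \bigl(L_\alpha + (1-\alpha)D\bigr)\cdot (K_X+D)^{n-1}
\end{align}
and, discarding the ample part $L_\alpha \cdot (K_X+D)^{n-1} \geq 0$ (which is nonnegative since $K_X+D$ is nef), reduce to bounding $(1-\alpha)\,D\cdot(K_X+D)^{n-1} = (1-\alpha)\sum_i D_i \cdot (K_X+D)^{n-1}$ from below by $q$ times a constant. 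The crux is then showing $D_i \cdot (K_X+D)^{n-1} \geq 1$ for each $i$, which by the disjointness reduces to a self-intersection computation on $D_i$, after which letting $\alpha \to \tfrac{1}{3}$ yields the factor $(1-\alpha)^{-1} = \tfrac{3}{2}$ raised to the appropriate power through the iterated restriction.

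Finally I would assemble these pieces: having shown each component forces a definite positive contribution and having tracked the constant $\tfrac{3}{2}$ arising from $\alpha \to \tfrac{1}{3}$, I would conclude $q \leq (\tfrac{3}{2})^n (K_X+D)^n$. The delicate points to verify carefully are that the restriction $L_\alpha|_{D_i}$ remains ample (so that its top self-intersection on $D_i$ is strictly positive, guaranteeing a nonzero integer contribution) and that the power of $\tfrac{3}{2}$ is exactly $n$ and not $n-1$; the latter I expect to be settled by applying the $\alpha \to \tfrac13$ comparison not once but at each stage of peeling off a factor of $K_X+D$, so that the reciprocal $(1-\alpha)^{-1}$ accumulates to the $n$-th power.
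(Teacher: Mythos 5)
Your central step fails numerically: you propose to show $D_i \cdot (K_X+D)^{n-1} \geq 1$, but this intersection number is exactly zero. By adjunction, $(K_X+D)|_{D_i} = K_{D_i} = \oo_{D_i}$, since each $D_i$ is an abelian variety with trivial canonical class; hence $L := K_X+D$ restricts trivially to every boundary component and $D_i \cdot L^{n-1} = 0$. Consequently your decomposition $(K_X+D)^n = L_\alpha \cdot L^{n-1} + (1-\alpha)\,D\cdot L^{n-1}$ collapses to the tautology $L^n = L_\alpha \cdot L^{n-1}$ and carries no information about $q$; the same triviality rules out any lower bound of the shape $(K_X+D)^{n-1}\cdot D_i \geq (n+1)^{n-1}$. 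The quantity that actually sees each cusp is the top power of the normal bundle, $(-1)^{n-1}D_i^n$: you correctly observe in passing that the positivity of $K_X+\alpha D$ along $D_i$ measures $-(1-\alpha)D|_{D_i}$, but your concrete computation never uses this, pairing $D_i$ instead against a divisor that dies on $D$.

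The paper's proof runs that observation to completion in two steps, both exploiting $L|_{D} = \oo_{D}$. First, $2L-D = 2(K_X+\tfrac12 D)$ is ample by Theorem \ref{Mok}, and $(2L-D)|_{D_i} = -D|_{D_i}$, so $D_i\cdot(2L-D)^{n-1} = (-1)^{n-1}D_i^n \geq 1$ for each $i$; summing over the disjoint components gives $q \leq D\cdot(2L-D)^{n-1} = (-1)^{n-1}D^n$. Second, $3L-2D = 3(K_X+\tfrac13 D)$ is nef (this is the $\alpha \to \tfrac13$ limit you anticipated), and since every mixed term $L^{n-k}\cdot D^{k}$ with $1 \leq k \leq n-1$ vanishes by the triviality of $L|_{D}$, the binomial expansion degenerates to
\bdism
0 \leq (3L-2D)^{n} = 3^{n}L^{n} + (-1)^{n}2^{n}D^{n},
\edism
which gives $(-1)^{n-1}D^{n} \leq \left(\tfrac{3}{2}\right)^{n}L^{n}$. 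This is where the constant $\left(\tfrac32\right)^n$ truly comes from: not from iterated peeling of factors of $K_X+D$ with an accumulating $(1-\alpha)^{-1}$, but from a single application of nefness in which the vanishing of all cross terms leaves only the pure powers $3^n L^n$ and $2^n D^n$. Your instincts about $\alpha = \tfrac13$, about disjointness of the components, and about the anti-ample normal bundle were all sound; the missing idea is to trade $L$ for the auxiliary divisors $2L-D$ and $3L-2D$, against which $D$ has nonzero intersection.
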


\begin{proof}
Once we have Theorem \ref{Mok}, the above result follows from a
quite straightforward computation. Let $L:=K_{X}+D$. Because of
Theorem \ref{Mok}, we know that $3L-2D$ is nef and $2L-D$ is ample.
Then \bdism q\leq D\cdot (2L-D)^{n-1}=(-1)^{n-1}D^{n}. \edism Since
$3L-2D$ is nef we get that \bdism
(3L-2D)^{n}=3^{n}L^{n}+(-1)^{n}2^{n}D^{n}\geq 0. \edism Combining
the two inequalities we get the result.
\end{proof}

Note that we can also a bound on the top self-intersection of $D$ in
terms of $(K_{X}+D)^{n}$ only. Let us summarize this fact in the
form of a corollary as it will be used in Section \ref{chowbound}.

\begin{corollary}\label{top}
Let $(X,D)$ be a toroidal compactification. Then \bdism
(-1)^{n-1}D^{n}\leq \left(\frac{3}{2}\right)^{n}(K_{X}+D)^{n}.
\edism
\end{corollary}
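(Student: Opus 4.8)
The plan is to extract the desired inequality directly from the intersection-theoretic computation underlying Theorem~\ref{3}, so no genuinely new input beyond Theorem~\ref{Mok} is required. Writing $L := K_X + D$, the first step is to record the two positivity facts supplied by Theorem~\ref{Mok}: since $K_X + \tfrac{1}{3}D$ is a limit of ample classes it is nef, so $3L - 2D = 3K_X + D$ is nef; and since $\tfrac{1}{2} \in (\tfrac{1}{3},1)$, the class $2L - D = 2K_X + D$ is ample. For the corollary it is only the nefness of $3L - 2D$ that matters, through the inequality $(3L-2D)^n \geq 0$ valid for any nef divisor on an $n$-dimensional variety.

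The key step is to evaluate $(3L-2D)^n$ and observe that all mixed terms vanish. I would expand $(3L-2D)^n = \sum_{k=0}^{n} \binom{n}{k} 3^{n-k}(-2)^k\, L^{n-k}D^k$ and argue that every summand with $1 \leq k \leq n-1$ is zero. This is exactly where the geometry of the toroidal compactification enters: the components $D_i$ of $D$ are pairwise disjoint abelian varieties, so $D^k = \sum_i D_i^k$ with no cross terms, while by adjunction $(K_X+D)|_{D_i} = K_{D_i} = \mathcal{O}_{D_i}$ is numerically trivial. Consequently any intersection number of the form $L^a D^b$ with $a \geq 1$ and $b \geq 1$ restricts to a computation on some single $D_i$ carrying at least one factor of the trivial class $L|_{D_i}$, and hence vanishes. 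This collapses the expansion to $(3L-2D)^n = 3^n L^n + (-1)^n 2^n D^n$.

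The final step is purely arithmetic: rewriting $3^n L^n + (-1)^n 2^n D^n \geq 0$ as $3^n L^n \geq (-1)^{n-1} 2^n D^n$ and dividing by $2^n > 0$ gives $(-1)^{n-1} D^n \leq \bigl(\tfrac{3}{2}\bigr)^n L^n = \bigl(\tfrac{3}{2}\bigr)^n (K_X+D)^n$, which is the assertion. The only nontrivial point is the vanishing of the mixed intersection terms; I expect this to be routine, since the disjointness of the $D_i$, their abelian-variety structure, and adjunction are all already in force from the construction of the toroidal compactification, and the same vanishing is used implicitly in the proof of Theorem~\ref{3}.
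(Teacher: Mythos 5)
Your proof is correct and is essentially the paper's own argument: the corollary is extracted directly from the proof of Theorem \ref{3}, where the nefness of $3L-2D$ (obtained from Theorem \ref{Mok} by letting $\alpha\to\frac{1}{3}$) yields $(3L-2D)^{n}=3^{n}L^{n}+(-1)^{n}2^{n}D^{n}\geq 0$ and hence the stated bound. Your explicit verification that the mixed terms $L^{a}\cdot D^{b}$ ($a,b\geq 1$) vanish, via the disjointness of the components $D_{i}$ and the adjunction $(K_{X}+D)|_{D_{i}}=K_{D_{i}}=\mathcal{O}_{D_{i}}$, correctly supplies a step the paper leaves implicit.
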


We now present an estimate on the number of cusps which does not
rely on Theorem \ref{Mok}. The key point is the following corollary
of Proposition \ref{exact}.

\begin{corollary}\label{cusp2}
Let $(X,D)$ be a toroidal compactification. Let $q$ be the number of
components of $D$. Then \bdism h^{0}(X,\oo_{X}(2(K_{X}+D)))\geq q.
\edism
\end{corollary}

In particular, in order to bound the number of cusps, it is enough
to bound $h^{0}(X,\oo_{X}(2(K_{X}+D))$. This can be done using a
result of Matsusaka. For an outline of the proof we refer to
VI.2.15.8.7 page 301 in \cite{Kol2}.

\begin{theorem}[Matsusaka]\label{matsu}
Let $H$ be a big and nef divisor on a smooth variety $X$ of
dimension $n$. Then for any $m\geq 1$ we have \bdism
h^{0}(X,\oo_{X}(mH))\leq m^{n}H^{n}+n. \edism
\end{theorem}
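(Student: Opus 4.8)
The plan is to prove the bound by induction on $n=\dim X$, using a member of the linear system $|mH|$ to cut down the dimension. The base case $n=0$ is immediate: $X$ is a point, $h^{0}=1$, and the right-hand side equals $m^{0}H^{0}+0=1$. For the inductive step I may assume $h^{0}(X,\oo_{X}(mH))>0$, since otherwise the inequality holds trivially, the right-hand side being nonnegative because $H^{n}\geq 0$ for a nef divisor.

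Assuming $|mH|\neq\emptyset$, I would pick a nonzero section $s\in H^{0}(X,\oo_{X}(mH))$ and let $D$ be its divisor of zeros, so that $D\in|mH|$ is an effective divisor of dimension $n-1$. Since $X$ is integral, multiplication by $s$ is injective and yields the short exact sequence \bdism 0\rightarrow\oo_{X}\stackrel{\cdot s}{\rightarrow}\oo_{X}(mH)\rightarrow\oo_{D}(mH)\rightarrow 0. \edism Passing to the long exact sequence in cohomology gives $h^{0}(X,\oo_{X}(mH))\leq h^{0}(X,\oo_{X})+h^{0}(D,\oo_{D}(mH))=1+h^{0}(D,\oo_{D}(mH))$. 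Now $H|_{D}$ is nef on $D$, and since $D\equiv mH$ the intersection number on $D$ is $(H|_{D})^{n-1}=H^{n-1}\cdot D=mH^{n}>0$, so $H|_{D}$ is in fact big and nef. Applying the inductive hypothesis to $H|_{D}$ on the $(n-1)$-dimensional $D$, and noting $\oo_{D}(mH)=\oo_{D}(m(H|_{D}))$, I obtain \bdism h^{0}(D,\oo_{D}(mH))\leq m^{n-1}(H|_{D})^{n-1}+(n-1)=m^{n}H^{n}+(n-1). \edism Combining the two estimates yields $h^{0}(X,\oo_{X}(mH))\leq m^{n}H^{n}+n$, exactly as claimed; observe that the extra $+1$ coming from $h^{0}(X,\oo_{X})=1$ is precisely what advances the additive constant $+(n-1)$ to $+n$.

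The main obstacle will be that the cut $D\in|mH|$ need not be smooth, reduced, or irreducible, so the inductive hypothesis, stated for a smooth variety, does not apply to $D$ verbatim (and $mH$ big and nef need not be base-point free, so Bertini is not directly available). I expect to resolve this in the standard way: it suffices to establish the statement for an arbitrary projective variety and then reduce $D$ to its irreducible components, passing to their normalizations or resolutions. Restriction of global sections along such a proper birational map is injective, so $h^{0}$ can only increase, while the relevant top intersection numbers are preserved by the projection formula; a filtration of $\oo_{D}$ by the structure sheaves of the components then makes the per-component bounds add up correctly. This is exactly the technical bookkeeping carried out in \cite{Kol2}, VI.2.15.8.7, whose treatment I would follow for the singular case.
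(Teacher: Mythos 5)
Your top-level induction is indeed the standard route, and in fact the paper offers no proof of this statement at all: it only points to the outline in VI.2.15.8.7 of \cite{Kol2}, which proceeds exactly by cutting with a member of the linear system and inducting on dimension. Your arithmetic for the irreducible smooth case is correct: $h^{0}(X,\oo_{X}(mH))\leq 1+h^{0}(D,\oo_{D}(mH))$ and $m^{n-1}(H|_{D})^{n-1}+(n-1)=m^{n}H^{n}+(n-1)$ close the induction. But the last paragraph, which is where the entire mathematical content lives, does not work as you describe, for two concrete reasons. First, you claim $H|_{D}$ is ``big and nef'' because $(H|_{D})^{n-1}=mH^{n}>0$; on a reducible $D$ this is a statement about the total cycle and says nothing about individual components $D_{i}$, which may well satisfy $H^{n-1}\cdot D_{i}=0$. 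On such components the inductive inequality is simply \emph{false} for nef non-big divisors: on $S=\pp^{1}\times\pp^{1}$ with $L=p_{1}^{*}\oo_{\pp^{1}}(2)$ one has $h^{0}(S,L)=3>L^{2}+2$. This shape genuinely occurs: if $H=\pi^{*}A$ for a birational morphism $\pi\colon X\to Y$ contracting a divisor $E$ to a curve, with $A$ ample, then $H$ is big and nef, members of $|mH|$ can contain $E$, and $H|_{E}$ is the pullback of an ample divisor from a curve --- precisely the counterexample. So ``apply the bound to each component'' cannot be carried out verbatim.

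Second, even where every component behaves, the bookkeeping does not ``add up correctly'': writing $D=\sum_{i}a_{i}D_{i}$, a filtration of $\oo_{D}$ by (twisted) structure sheaves of the reduced components yields at best $h^{0}(D,\oo_{D}(mH))\leq\sum_{i}a_{i}\bigl(m^{n-1}H^{n-1}\cdot D_{i}+(n-1)\bigr)=m^{n}H^{n}+(n-1)\sum_{i}a_{i}$, whereas your induction can afford only a single $+(n-1)$. The additive constant accumulates once per component counted with multiplicity, so the naive filtration overshoots except when $D$ is irreducible and reduced --- and for $m$ small one cannot choose such a member (when $h^{0}(mH)\geq n+1$ the system moves, but its members may still all be reducible or non-reduced). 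Repairing this requires a genuinely stronger inductive statement or a more careful peeling argument (e.g., exploiting the connectedness of members of big and nef systems so that sections do not glue freely across components, or subtracting one irreducible component at a time via $0\to\oo_{X}(mH-D_{1})\to\oo_{X}(mH)\to\oo_{D_{1}}(mH)\to 0$ with a second induction); this is exactly what the cited outline in \cite{Kol2} supplies and what your sketch presupposes rather than proves. As written, the proposal is the correct strategy with the crucial step missing.
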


Combining these two results we get the following.

\begin{proposition}\label{matsusaka}
Let $(X,D)$ be a toroidal compactification. Let $q$ be the number of
components of $D$. Then \bdism q\leq (2^{n}+1)(K_{X}+D)^{n}. \edism
\end{proposition}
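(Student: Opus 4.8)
The plan is to chain together the two results immediately preceding the statement. The key observation is that Corollary \ref{cusp2} reduces the problem of bounding $q$ to bounding $h^{0}(X,\oo_{X}(2(K_{X}+D)))$, while Matsusaka's Theorem \ref{matsu} provides exactly such a bound whenever the divisor in question is big and nef. So the proof is essentially a substitution.

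First I would recall that $K_{X}+D$ is big and nef: this was established in the proof of Theorem \ref{Mok}, where the K\"ahler-Einstein current $\hat{\omega}\in[K_{X}+D]$ was regularized using Demailly's techniques. Setting $H:=K_{X}+D$, the divisor $H$ is big and nef, and hence so is any positive multiple; in particular the hypotheses of Theorem \ref{matsu} are satisfied. Applying that theorem with $m=2$ gives
\begin{align}\notag
h^{0}(X,\oo_{X}(2(K_{X}+D)))\leq 2^{n}(K_{X}+D)^{n}+n.
\end{align}
Combining this with the inequality $q\leq h^{0}(X,\oo_{X}(2(K_{X}+D)))$ from Corollary \ref{cusp2} yields $q\leq 2^{n}(K_{X}+D)^{n}+n$.

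The final step is to absorb the additive $+n$ into the main term so as to obtain the clean bound $q\leq(2^{n}+1)(K_{X}+D)^{n}$. For this I would invoke Lemma \ref{gromov}, which gives $(K_{X}+D)^{n}\geq(n+1)^{n-1}\geq n$ for all $n\geq 2$. Therefore $n\leq(K_{X}+D)^{n}$, and so
\begin{align}\notag
q\leq 2^{n}(K_{X}+D)^{n}+n\leq 2^{n}(K_{X}+D)^{n}+(K_{X}+D)^{n}=(2^{n}+1)(K_{X}+D)^{n},
\end{align}
which is the desired estimate.

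I do not anticipate a serious obstacle here, since every ingredient is already in place. The only point requiring a moment of care is the last reabsorption of the additive constant: one must verify that $(K_{X}+D)^{n}$ genuinely dominates $n$, and the cleanest way to see this is via the explicit lower bound $(n+1)^{n-1}$ of Lemma \ref{gromov} rather than any softer positivity statement. If one preferred to avoid Lemma \ref{gromov}, one could alternatively note that $(K_{X}+D)^{n}$ is a positive integer and argue directly, but using Lemma \ref{gromov} keeps the argument uniform and transparent.
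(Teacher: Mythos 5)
Your proposal is correct and follows exactly the paper's route: the text preceding the proposition states that the bound comes from combining Corollary \ref{cusp2} with Theorem \ref{matsu} at $m=2$, and the paper's own one-line proof is precisely the absorption step you spell out, invoking Lemma \ref{gromov} to get $(K_{X}+D)^{n}\geq n$ and fold the additive $n$ into the main term. Your write-up merely makes explicit the substitutions the paper leaves implicit, so there is nothing to correct.
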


\begin{proof}
By Lemma \ref{gromov}, we have that $(K_{X}+D)^{n}\geq n$.
\end{proof}

It is interesting to note how the bounds given in Theorem \ref{3}
and Proposition \ref{matsusaka} rely on few basic geometric
properties of the pair $(X, D)$. Roughly speaking, they depend on
the fact that $L=K_{X}+D$ is big, nef and such that
$L_{|_{D}}=\oo_{D}$. None of the other special features of toroidal
compactifications of ball quotients is used. Therefore, these
techniques can be successfully applied to study complete finite
volume K\"ahler manifolds with pinched negative sectional curvature
which are not locally symmetric. These results will appear
elsewhere.

Finally, despite their simple nature, the bounds given in Theorem
\ref{3} and Proposition \ref{matsusaka} are the best currently known
bounds for surfaces and threefolds, compare with \cite{Parker} and
\cite{Hwa1}.

\subsection{Bounds on the number of varieties}\label{chowbound}

In this section we study the problem of finding effective bounds on
the number of complex hyperbolic manifolds with bounded volume. The
main issue is to find effective embedding results for toroirodal
compactifications. This was successfully done in Theorem
\ref{veryample} and we investigate its consequences here. The key
result of this section is Theorem \ref{degree1} stated in the
Introduction, which we restate for the convenience of the reader.

\begin{theorem}[Theorem \ref{degree1}]\label{embed}
Let $(X,D)$ be a toroidal compactification. Then $X$ can be realized
as a smooth subvariety of $\pp^{2n+1}$ such that its degree $d$
satisfies \bdism d\leq (2n+2)^{3n}(K_{X}+D)^{n}. \edism
\end{theorem}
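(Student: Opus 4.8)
The plan is to combine the very ampleness result of Theorem \ref{veryample} with a standard generic projection argument to land $X$ inside $\pp^{2n+1}$, and then estimate the degree of the image directly in terms of the intersection number $(K_X+D)^n$. First I would set $m:=2(n+1)^3$ and consider the embedding of $X\backslash D$ induced by $m(K_X+D)$, which Theorem \ref{veryample} guarantees is very ample modulo $D$. Composing with Proposition \ref{exact} (which separates the boundary components) realizes $X$ as a smooth projective subvariety of $\pp^N$ for some large $N=h^0(X,\oo_X(m(K_X+D)))-1$. The degree of this image in $\pp^N$ is exactly $\deg_{\pp^N}(X)=\bigl(m(K_X+D)\bigr)^n=m^n(K_X+D)^n$, so with $m=2(n+1)^3$ we get a degree of $\bigl(2(n+1)^3\bigr)^n(K_X+D)^n$, which is already of the claimed order since $\bigl(2(n+1)^3\bigr)^n\leq (2n+2)^{3n}$.

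The remaining point is to cut the ambient dimension down from $N$ to $2n+1$ without changing the degree. The standard tool here is generic linear projection: a smooth projective variety of dimension $n$ embedded in $\pp^N$ can be isomorphically projected into $\pp^{2n+1}$, and such a projection from a general center disjoint from $X$ is an isomorphism onto its image that preserves the degree (the secant and tangent variety of $X$ each have dimension at most $2n+1$, so a general point of $\pp^N$ lies off both, and projecting from such a point stays an embedding as long as $N>2n+1$). Iterating, one descends to $\pp^{2n+1}$ while keeping the degree equal to $m^n(K_X+D)^n$.

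The main obstacle I anticipate is a bookkeeping one rather than a conceptual one: one must verify that the projection can be performed so that the image remains \emph{smooth} and the map an honest embedding all the way down to $\pp^{2n+1}$, i.e. that the generic projection lemma genuinely applies at each stage and not merely to produce a birational image. This is classical (it is the statement that every smooth projective $n$-fold embeds in $\pp^{2n+1}$), so I would invoke it as a known result rather than reprove it. The only genuine numerical check is the inequality $\bigl(2(n+1)^3\bigr)^n\leq (2n+2)^{3n}$, which holds because $2(n+1)^3\leq (2n+2)^3=8(n+1)^3$; this slack is exactly what lets us state the cleaner bound $(2n+2)^{3n}(K_X+D)^n$ in the theorem. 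Hence the degree estimate follows.
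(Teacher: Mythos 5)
There is a genuine gap at the very first step of your plan: no multiple of $K_{X}+D$ can embed $X$, however large you take $m$. Each boundary component $D_{i}$ is an abelian variety with $(K_{X}+D)|_{D_{i}}\cong\oo_{D_{i}}$, so every section of $\oo_{X}(m(K_{X}+D))$ restricts to a constant on $D_{i}$ (as $h^{0}(D_{i},\oo_{D_{i}})=1$), and the morphism associated to $|m(K_{X}+D)|$ contracts each $D_{i}$ to a point. Proposition \ref{exact} cannot repair this: it only produces sections separating the \emph{components} of $D$ from one another, i.e. it guarantees the $D_{i}$ map to \emph{distinct} points. What your construction actually yields is precisely the content of Theorem \ref{Mok1}: an embedding of $X\backslash D$ together with distinct singular image points for the boundary components --- the singular compactification $X^{*}$, not a smooth subvariety of $\pp^{N}$. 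Your degree computation $\deg(X)=\bigl(m(K_{X}+D)\bigr)^{n}$ therefore concerns the wrong (singular) image, and the phrase ``very ample modulo $D$'' has been silently upgraded to ``very ample,'' which is exactly the distinction Theorem \ref{veryample} is careful to make.

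The paper's proof fixes this by embedding with the divisor that is honestly very ample on all of $X$, constructed inside the proof of Theorem \ref{veryample}: $A=2(n+1)^{3}(K_{X}+D)-kD$ for an explicit positive integer $k$ (Angehrn--Siu freeness applied to the ample integral divisor $2K_{X}+D$, then Castelnuovo--Mumford regularity). Subtracting the effective multiple of $D$ is what lets the abelian components be embedded rather than contracted, since $-D|_{D_{i}}$ is ample. The degree bound then follows because $2(n+1)^{3}(K_{X}+D)-A=kD$ is effective, $A$ is ample, $K_{X}+D$ is nef and trivial on $D$: replacing the factors $A$ by $2(n+1)^{3}(K_{X}+D)$ one at a time only increases the intersection number, giving $d=A^{n}\leq\bigl(2(n+1)^{3}\bigr)^{n}(K_{X}+D)^{n}\leq(2n+2)^{3n}(K_{X}+D)^{n}$. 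Your final step --- generic linear projection from $\pp^{N}$ down to $\pp^{2n+1}$ preserving smoothness and degree --- is correct and is exactly what the paper invokes; the missing idea is solely which divisor performs the embedding and hence whose top self-intersection controls the degree.
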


\begin{proof}
Let $L:=K_{X}+D$. By Theorem \ref{veryample}, we know that
$2(n+1)^{3}L-kD$ is very ample, for some explicit positive integer
$k$. Then $2(n+1)^{3}L-kD$ defines an embedding of $X$ into some
projective space with degree $d\leq \left(2(n+1)\right)^{3n}L^{n}$.
By general projection, we obtain that $X$ sits inside $\pp^{2n+1}$.
\end{proof}

As pointed out in the Introduction, Hwang proved a similar result in
\cite{Hwa2}. While our bound is linear in $(K_{X}+D)^{n}$, he
derived a polynomial bound in $(K_{X}+D)^{n}$ of degree $n+1$. Once
we have Theorem \ref{embed}, counting the number of complex
hyperbolic manifolds is reduced to standard techniques on the
complexity of Chow varieties. Let us briefly explain this point.
Associated to a hyperbolic manifold, we have a toroidal
compactification $(X,D)$. Therefore, it is enough to count such
pairs. Fujiki in \cite{Fujiki} proved that a toroidal
compactification $(X,D)$ is infinitesimally rigid under deformations
of the pair. This implies that the number of toroidal
compactifications is bounded by the number of components of a
suitable Chow variety, see Corollary \ref{pairs}. Moreover, Hwang
pointed out that Fujiki's theorem implies something more. If $n\geq
3$, $X$ itself is rigid under deformations and the same method
applies, see Proposition 4.2 in \cite{Hwa2}.

We start by counting the varieties which arise as toroidal
compactifications of complex hyperbolic manifolds with bounded
volume. In particular, in the next statement we forget about the
extra structure coming from the boundary divisor.

Fix two positive integers  $d$ and $m>n$. We denote by
$\Chow_{m}(n,d)$ the Chow variety of $n$-dimensional irreducible
smooth subvarieties of $\pp^{m}$ of degree $d$.

\begin{corollary}\label{chow}
Fix two positive integers $n\geq 3$ and $V$. Then the number of
varieties $X$ arising as toroidal compactifications $(X,D)$ with
$\dim(X)=n$ and $(K_{X}+D)^{n}\leq V$ is bounded by \bdism
\sum_{d=1}^{d_{0}}\binom{(2n+2)d}{2n+1}^{(2n+2)d\binom{d+n-1}{n}+(2n+2)\binom{d+n-1}{n-1}},
\edism where $d_{0}=\left(2n+2\right)^{3n}V$.
\end{corollary}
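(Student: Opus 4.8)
The plan is to bound the number of isomorphism classes by first turning each $X$ into a point of a Chow variety of controlled degree, then using rigidity to pass from points to irreducible components, and finally estimating the number of components of each Chow variety from its defining equations.

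First I would invoke Theorem \ref{embed}. Since every $X$ under consideration satisfies $(K_{X}+D)^{n}\leq V$, that theorem realizes $X$ as a smooth subvariety of $\pp^{2n+1}$ of degree $d$ with $d\leq (2n+2)^{3n}V=d_{0}$. Hence each such $X$ determines a point of $\Chow_{2n+1}(n,d)$ for some $1\leq d\leq d_{0}$, and the problem is reduced to counting, for each fixed $d$ in this range, how many non-isomorphic $X$ can occur among the points of $\Chow_{2n+1}(n,d)$, and then summing over $d$.

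Next I would use rigidity to replace points by irreducible components. For $n\geq 3$, Fujiki's infinitesimal rigidity \cite{Fujiki}, in the form noted by Hwang in Proposition 4.2 of \cite{Hwa2}, gives $H^{1}(X,T_{X})=0$, so $X$ does not deform. Consequently, over any irreducible component $W$ of $\Chow_{2n+1}(n,d)$ the universal family is locally trivial near every fiber isomorphic to such an $X$; thus the locus of $W$ parametrizing a fixed rigid isomorphism type is open. Since $W$ is irreducible, any two non-empty open subsets meet, so at most one isomorphism class of rigid $X$ can occur in a single component. Therefore the number of isomorphism classes of $X$ with $(K_{X}+D)^{n}\leq V$ is at most the sum over $1\leq d\leq d_{0}$ of the number of irreducible components of $\Chow_{2n+1}(n,d)$.

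The heart of the argument, and the step I expect to be the main obstacle, is the explicit bound on the number of irreducible components of $\Chow_{2n+1}(n,d)$. Here I would use the classical theory of Chow forms: a cycle is encoded by its Chow form, a tuple of coefficients lying in an affine space whose dimension is exactly $(2n+2)d\binom{d+n-1}{n}+(2n+2)\binom{d+n-1}{n-1}$, and the condition that such a tuple be the Chow form of an honest $n$-cycle is cut out by explicit algebraic relations of degree at most $\binom{(2n+2)d}{2n+1}$. Applying the standard estimate that a variety defined in $N$-space by equations of degree at most $\delta$ has at most $\delta^{N}$ irreducible components yields the summand $\binom{(2n+2)d}{2n+1}^{(2n+2)d\binom{d+n-1}{n}+(2n+2)\binom{d+n-1}{n-1}}$, and summing over $1\leq d\leq d_{0}$ gives the stated bound. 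The delicate points are the bookkeeping that pins down the precise dimension of the space of Chow coordinates and the precise degree of the Chow relations, so that they match the base and exponent in the statement, together with the justification of the degree-versus-dimension (Bézout/Heintz-type) component estimate. By contrast, the rigidity reduction is formal once $H^{1}(X,T_{X})=0$ is available, and the bound $d\leq d_{0}$ is immediate from Theorem \ref{embed}.
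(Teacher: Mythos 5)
Your proposal is correct and follows essentially the same route as the paper: embed $X$ in $\pp^{2n+1}$ with degree $d\leq d_{0}$ via Theorem \ref{embed}, use Fujiki's rigidity (Proposition 4.2 of \cite{Hwa2}) to allow at most one isomorphism class of rigid smooth fiber per irreducible component of $\Chow_{2n+1}(n,d)$, and sum the component counts over $1\leq d\leq d_{0}$. The only difference is presentational: where you sketch re-deriving the component bound from Chow forms and a B\'ezout-type estimate, the paper simply cites I.3.28.9 of \cite{Kol2}, which is precisely the bound $\binom{(2n+2)d}{2n+1}^{(2n+2)d\binom{d+n-1}{n}+(2n+2)\binom{d+n-1}{n-1}}$ you aim to reconstruct.
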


\begin{proof}
Let $(X,D)$ be a toroidal compactification as in the statement. By
Theorem \ref{embed}, $X$ can be embedded in $\pp^{2n+1}$ as a smooth
subvarieties of degree $d\leq d_{0}$. Proposition 4.2 in \cite{Hwa2}
implies that $X$ is rigid under deformation. In particular, the
number of such toroidal compactifications is bounded by the number
of components of the Chow variety $\Chow_{2n+1}(n,d_{0})$. A
straightforward application of I.3.28.9 in \cite{Kol2} gives the
result.
\end{proof}

\begin{remark}
Independently of the value of $d_{0}$, the bound on the number of
components of the Chow variety is of the form \bdism
\left(a_{n}d_{0}\right)^{\left(b_{n}d_{0}\right)^{n+1}}, \edism for
some function $a_{n}$ and $b_{n}$ which depend only on $n$.
Therefore, the bound in Corollary \ref{chow}, expressed just in
terms of the volume, is of the form \bdism V^{V^{n+1}}. \edism
\end{remark}

We can now give effective estimates on the number of toroidal
compactifications $(X,D)$ with bounded $(K_{X}+D)^{n}$, counted as
pairs. This corresponds exactly to bounding the number of complex
hyperbolic manifolds with bounded volume. Unfortunately the bounds
are slightly worst than the ones in Corollary \ref{chow}.

We need to study a different Chow variety which takes into account
the boundary divisor. Given positive integers
$q,m,n,n_{1},\dots,n_{q},d,d_{1},\dots,d_{q}$ we define the Chow
variety $\Chow_{m}(n,d;n_{1},\dots,n_{q};d_{1},\dots,d_{q})$ to be
the closed subvariety of \bdism \Chow_{m}(n,d)\times
\Chow_{m}(n_{1},d_{1})\times \cdots \times \Chow_{m}(n_{q},d_{q}),
\edism parametrizing $(q+1)$-tuples $(X,D_{1},\dots,D_{q})$ where
$X$ is a smooth subvariety of dimension $n$ and degree $d$ in
$\pp^{m}$ and each $D_{i}$ is a smooth subvariety of dimension
$n_{i}$ and degree $d_{i}$ contained in $X$.

In \cite{Hwa2}, Hwang generalizes the effective bounds on the number
of components in \cite{Kol2} to the above defined Chow varieties. We
will use his bound to derive an effective estimate on the number of
toroidal compactifications.

The first thing to do is to bound the degree of each component of
$D$ when embedded into some fixed projective space.

\begin{lemma}\label{degree2}
Let $(X,D)$ be a toroidal compactification. Let $d_{i}$ be the
degree of $D_{i}$ under the embedding given by Theorem \ref{embed}.
Then for any $i$ \bdism d_{i}\leq
\left(\frac{3}{2}\right)^{n}(n+1)^{3n-3}(K_{X}+D)^{n}. \edism
\end{lemma}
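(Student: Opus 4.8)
The plan is to compute the degree $d_i$ as an intersection number on $D_i$ and then bound the two factors that appear. Recall from the proof of Theorem \ref{embed} that the embedding is induced by a very ample divisor of the form $A := 2(n+1)^3 L - kD$, where $L := K_X + D$ and $k$ is the explicit positive integer coming from the construction in Theorem \ref{veryample}. By definition the degree of $D_i$ under this embedding is the top self-intersection of the restricted divisor on the $(n-1)$-dimensional variety $D_i$, namely $d_i = \left(A|_{D_i}\right)^{n-1}$.

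First I would simplify $A|_{D_i}$. Since the components of $D$ are pairwise disjoint abelian varieties, adjunction gives $K_{D_i} = (K_X + D_i)|_{D_i} = (K_X + D)|_{D_i}$, and $K_{D_i} = \oo_{D_i}$ because $D_i$ is abelian; hence $L|_{D_i} = \oo_{D_i}$ and $D|_{D_i} = D_i|_{D_i}$. Therefore $A|_{D_i} = -k\,(D_i|_{D_i})$, and
\[
d_i = (-k)^{n-1}(D_i|_{D_i})^{n-1} = (-1)^{n-1}k^{n-1}D_i^n = k^{n-1}\,\bigl((-1)^{n-1}D_i^n\bigr).
\]
The quantity $(-1)^{n-1}D_i^n$ is positive because the normal bundle of $D_i$ is anti-ample (Theorem 1 in \cite{Mok}), so $-D_i|_{D_i}$ is ample on $D_i$ and its top power is positive.

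It then remains to bound the two factors. For the intersection factor, disjointness gives $D^n = \sum_j D_j^n$ with each summand of sign $(-1)^{n-1}$, so $(-1)^{n-1}D_i^n \le (-1)^{n-1}D^n$, and Corollary \ref{top} yields $(-1)^{n-1}D_i^n \le (3/2)^n L^n$. For the combinatorial factor, I would read off from the construction in Theorem \ref{veryample} that $k$ can be taken to be $k_0 = (n^2+1)(n+2) + n + 3$, the $D$-coefficient of the very ample divisor $M K_X + (n^2+1)(n+2)D$ once it is re-expressed through $L$; an elementary estimate then shows $k_0 \le (n+1)^3$ for all $n \ge 2$. Combining,
\[
d_i = k^{n-1}\bigl((-1)^{n-1}D_i^n\bigr) \le (n+1)^{3(n-1)}\left(\tfrac{3}{2}\right)^n L^n = \left(\tfrac{3}{2}\right)^n (n+1)^{3n-3}(K_X+D)^n,
\]
which is the claimed bound.

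The main obstacle I anticipate is bookkeeping the exact coefficient $k$ of $D$ in the embedding divisor of Theorem \ref{embed}: the theorem only asserts the embedding uses $2(n+1)^3 L - kD$ for \emph{some} explicit positive integer $k$, and the estimate depends on the clean bound $k \le (n+1)^3$. I would therefore verify carefully that passing from the very ample divisor produced in Theorem \ref{veryample} to one of the normalized shape $2(n+1)^3 L - kD$ (by adding the nonnegative multiple $(2(n+1)^3 - M)L$) does not inflate the $D$-coefficient beyond $(n+1)^3$. The vanishing $L|_{D_i} = \oo_{D_i}$ guarantees that only this coefficient, and not the $L$-coefficient, enters $d_i$, so once the bound on $k$ is pinned down the rest of the argument is forced.
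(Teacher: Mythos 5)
Your proof is correct and follows essentially the same route as the paper: both compute $d_i = D_i\cdot A^{n-1}$ for the very ample divisor $A$ from Theorem \ref{veryample}, use $(K_X+D)|_{D_i}=\oo_{D_i}$ so that only the $D$-coefficient survives, bound that coefficient by $(n+1)^3$, and finish with disjointness of the $D_j$ together with Corollary \ref{top}. Your computation of the coefficient as $(n^2+1)(n+2)+n+3 = n^3+2n^2+2n+5$ is in fact more careful than the paper's displayed $n^3+2n+5$ (apparently a typo there), and your verification that it is still below $(n+1)^3$ for $n\geq 2$ closes that gap.
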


\begin{proof}
Let $A$ be the very ample divisor obtained in the proof of Theorem
\ref{veryample}. Recall that this divisor is of the form
\begin{align}\notag
A=M(K_{X}+D)-(n^{3}+2n+5)D
\end{align}
for some positive integer $M$. Since $(n^{3}+2n+5)<(n+1)^{3}$, we obtain that \bdism d_{i}=D_{i}\cdot
A^{n-1}\leq (-1)^{n-1}(n+1)^{3n-3}D^{n}\leq
\left(\frac{3}{2}\right)^{n}(n+1)^{3n-3}(K_{X}+D)^{n}, \edism where
the last inequality is obtained thanks to Lemma \ref{top}.
\end{proof}

Finally, we can prove an effective version of Wang's finiteness
theorem.

\begin{corollary}\label{pairs}
Fix two positive integers $n$ and $V$. Then the number of toroidal
compactifications with $\dim(X)=n$ and $(K_{X}+D)^{n}\leq V$ is less
than \bdism
\sum_{q=1}^{q_{0}}d_{0}^{q+1}\binom{(2n+2)d_{0}}{2n+1}^{(2n+2)(q+1)\binom{2n+1+d_{0}}{2n+1}},
\edism where $d_{0}=(2n+2)^{3n}V$ and $q_{0}=(3/2)^{n}V$.
\end{corollary}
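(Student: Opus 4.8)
The plan is to reduce the counting of toroidal compactifications, regarded as pairs, to counting irreducible components of a suitable Chow variety, exactly as in the proof of Corollary \ref{chow} but now keeping track of the boundary divisor. First I would fix a toroidal compactification $(X,D)$ with $\dim(X)=n$ and $(K_{X}+D)^{n}\leq V$, and write $D=\sum_{i=1}^{q}D_{i}$. By Theorem \ref{embed}, $X$ embeds in $\pp^{2n+1}$ as a smooth subvariety of degree $d\leq d_{0}=(2n+2)^{3n}V$. Under the same embedding each component $D_{i}$ is a smooth subvariety of dimension $n-1$, and Lemma \ref{degree2} bounds its degree by $d_{i}\leq (3/2)^{n}(n+1)^{3n-3}V$; since $(3/2)^{n}(n+1)^{3n-3}\leq (2n+2)^{3n}$ this also gives $d_{i}\leq d_{0}$. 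Finally Theorem \ref{3} yields $q\leq q_{0}=(3/2)^{n}V$. Thus every such pair determines a point of the Chow variety $\Chow_{2n+1}(n,d;n-1,\dots,n-1;d_{1},\dots,d_{q})$ for some admissible discrete data $q\leq q_{0}$ and $d,d_{1},\dots,d_{q}\leq d_{0}$.

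Next I would invoke rigidity to pass from points to components. By Fujiki's theorem \cite{Fujiki}, a toroidal compactification $(X,D)$ is infinitesimally rigid under deformations of the pair, hence admits no nontrivial deformation; therefore members of a connected family of embedded pairs in the Chow variety are all isomorphic, and two non-isomorphic compactifications sharing the same numerical data must lie in distinct irreducible components. Consequently, for each fixed $(q;d;d_{1},\dots,d_{q})$ the number of pairs is at most the number of irreducible components of $\Chow_{2n+1}(n,d;n-1,\dots,n-1;d_{1},\dots,d_{q})$. Observe that here, unlike in Corollary \ref{chow}, I use rigidity of the pair rather than of $X$ alone, so no assumption $n\geq 3$ is required.

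To bound this number of components I would apply Hwang's generalization \cite{Hwa2} of Kollár's estimate I.3.28.9 in \cite{Kol2}. Since there are $q+1$ subvarieties ($X$ together with the components $D_{i}$), all lying in $\pp^{2n+1}$ and all of degree at most $d_{0}$, the number of components is at most $\binom{(2n+2)d_{0}}{2n+1}^{(2n+2)(q+1)\binom{2n+1+d_{0}}{2n+1}}$, where the exponent $\binom{2n+1+d_{0}}{2n+1}=h^{0}(\pp^{2n+1},\oo(d_{0}))$ is the crude uniform bound that dominates the finer Kollár exponents and makes the estimate independent of the individual degrees $d,d_{1},\dots,d_{q}$.

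Finally I would sum over the discrete data. For each fixed $q\in\{1,\dots,q_{0}\}$ there are at most $d_{0}$ choices for each of the $q+1$ degrees $d,d_{1},\dots,d_{q}$, hence at most $d_{0}^{q+1}$ admissible degree-tuples, each contributing the Chow bound above; summing over $q$ from $1$ to $q_{0}$ yields the stated inequality. The step I expect to demand the most care is the rigidity reduction: one must check that Fujiki's infinitesimal rigidity genuinely separates isomorphism classes by components of the Chow variety, so that the component count is an honest upper bound, and that replacing every $d_{i}$ by the single uniform degree $d_{0}$ in Hwang's estimate is legitimate.
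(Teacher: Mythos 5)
Your proposal is correct and follows essentially the same route as the paper: embed via Theorem \ref{embed}, bound the boundary degrees with Lemma \ref{degree2} and the number of cusps with Theorem \ref{3}, use Fujiki's infinitesimal rigidity of the pair to reduce to counting irreducible components of the Chow variety $\Chow_{2n+1}(n,d;n-1,\dots,n-1;d_{1},\dots,d_{q})$ via Hwang's Proposition 3.2 in \cite{Hwa2}, and then sum over the at most $d_{0}^{q+1}$ degree-tuples for each $q\leq q_{0}$. Your added checks (that $(3/2)^{n}(n+1)^{3n-3}\leq(2n+2)^{3n}$ so each $d_{i}\leq d_{0}$, and that rigidity of the pair makes the result valid without the $n\geq 3$ hypothesis needed in Corollary \ref{chow}) are details the paper leaves implicit, and they are accurate.
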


\begin{proof}
Let $d$ be the degree of $X$ in $\pp^{2n+1}$ and let $d_{i}$ be the
degree of $D_{i}$ for any $1\leq i\leq q$. By definition, there
exists a point in
$\Chow_{2n+1}(n,d;n-1,\dots,n-1;d_{1},\dots,d_{q})$ parametrizing
$(X,D)$. Theorem 4.1 in \cite{Fujiki} implies that $(X,D)$ is
infinitesimally rigid under deformations of the pair. Therefore, the
number of toroidal compactifications with fixed
$(d,d_{1},\dots,d_{q})$ is bounded by the number of components of
the corresponding Chow variety. By Theorem \ref{embed}, we can embed
$X$ as a smooth subvariety of $\pp^{2n+1}$ with degree $d$ bounded
by $(2n+2)^{3n}V$. Furthermore, each $D_{i}$ has degree $d_{i}$
bounded by $(3/2)^{n}(n+1)^{3n-3}V$, by Lemma \ref{degree2}.
Finally, for each $q>0$ there are at most $d_{0}^{q+1}$ choices of
$(d,d_{1},\dots,d_{q})$, which combined with Proposition 3.2 in
\cite{Hwa2} and Theorem \ref{3}, give the result.
\end{proof}

Up to constants depending only on $n$, the above bound is of the
form $V^{V^{2n+2}}$.

\subsection{Bounds on the Picard numbers}\label{Picard1}

The first result of this section is a lower bound on the Picard
number of a toroidal compactification $(X, D)$ in terms of the
number of cusps of $X\backslash D$.

\begin{theorem}\label{picard}
Let $(X,D)$ be a toroidal compactification of dimension $n\geq 2$.
Let $q$ be the number of components of $D$. Then $q<\rho(X)$, where
$\rho(X)$ is the Picard number of $X$.
\end{theorem}

\begin{proof}
Suppose by contradiction that $\rho(X)\leq q$. Let $H$ be an ample
divisor on $X$. Then there exists a non trivial relation \bdism
a_{0}H+\sum_{i=1}^{q}a_{i}D_{i}\equiv 0, \edism where $a_{i}\in\rr$.
Since $D_{i}\cdot D_{j}^{n-1}=0$ for all $j\neq i$, the
compactifying divisors are not numerically equivalent and we can
assume $a_{0}=1$. Intersecting the above relation with a curve $C$
contained in $D_{i}$, we get that $a_{i}>0$ for all $i$. On the
other hand \bdism
0=H^{n-1}\cdot\left(H+\sum_{i=1}^{q}a_{i}D_{i}\right)=H^{n}+\sum_{i=1}^{q}a_{i}D_{i}\cdot
H^{n-1}>0, \edism gives a contradiction.
\end{proof}

We now bound from above the Picard number of a toroidal
compactification in terms of its volume. Note that, because of
Corollary \ref{pairs}, there are finitely many toroidal
compactifications with given upper bound on the volume. As a result,
there is a bound on their Picard numbers depending only on an upper
bound on the volume. The next theorem shows how the techniques
developed in this paper can be nicely applied to derive an explicit bound.

\begin{theorem}\label{above}
Let $(X, D)$ be a toroidal compactification of dimension $n\geq
3$. Then
\begin{align}\notag
\rho(X)\leq 30+4(1+(n-2)(2n^{2}+3))^{2}(2n^{2}+3)^{n-2}L^{n}.
\end{align}
\end{theorem}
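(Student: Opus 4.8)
The plan is to reduce the problem to estimating the second Betti number of a surface cut out of $X$, and then to control that Betti number by Noether's formula together with an intersection computation that exploits the triviality of $L:=K_X+D$ on the boundary. Let $B:=K_X+(n^2+1)(2K_X+D)=(2n^2+3)K_X+(n^2+1)D$, which is ample and base-point-free by the Angehrn--Siu argument used in the proof of Theorem \ref{veryample}; note $B=(2n^2+3)L-(n^2+2)D$. Choosing $n-2$ general members $A_1,\dots,A_{n-2}\in|B|$, Bertini gives a smooth surface $S=A_1\cap\dots\cap A_{n-2}$. Since $\rho(X)\le h^{1,1}(X)\le b_2(X)$ and the Lefschetz hyperplane theorem for the ample divisor $B$ produces a chain of isomorphisms terminating in an injection $H^2(X;\qq)\hookrightarrow H^2(S;\qq)$, I obtain $\rho(X)\le b_2(S)$.

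The second step is to bound $b_2(S)$. Noether's formula $c_2(S)=12\chi(\oo_S)-K_S^2$, combined with $c_2(S)=2-2b_1(S)+b_2(S)$, $b_1(S)=2q(S)$ and $\chi(\oo_S)=1-q(S)+p_g(S)$, gives the identity $b_2(S)=10-8q(S)+12p_g(S)-K_S^2$, valid for any smooth surface, where $K_S=(K_X+(n-2)B)|_S$. I would verify that $K_S$ is nef and big, so that $S$ is a minimal surface of general type, by testing $K_X+(n-2)B=eL-fD$ (with $e=1+(n-2)(2n^2+3)$ and $f=1+(n-2)(n^2+2)$) against curves: on a curve in $D$ it is positive because the normal bundle of $D$ is anti-ample, while on the remaining curves positivity follows from Theorem \ref{Mok}. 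Dropping the nonnegative term $8q(S)$ and invoking Noether's inequality $p_g(S)\le\tfrac12 K_S^2+2$ then bounds $b_2(S)$ by a universal constant plus a fixed multiple of $K_S^2$.

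Finally I would compute the intersection numbers in terms of $L^n$. The key input is $L|_{D_i}\cong\oo_{D_i}$, so every mixed product $L^a\cdot D^b$ with $a,b\ge1$ vanishes. Expanding in $\{L,D\}$ yields $L_S^2=L^2\cdot B^{n-2}=(2n^2+3)^{n-2}L^n$ and $K_S\cdot L_S=e\,(2n^2+3)^{n-2}L^n$, while $K_S^2=e^2(2n^2+3)^{n-2}L^n+f^2(-1)^n(n^2+2)^{n-2}D^n$. By Corollary \ref{top} the surviving $D^n$-term is nonpositive, whence $K_S^2\le e^2(2n^2+3)^{n-2}L^n=\bigl(1+(n-2)(2n^2+3)\bigr)^2(2n^2+3)^{n-2}L^n$ (equivalently $K_S^2\le(K_S\cdot L_S)^2/L_S^2$, consistent with the Hodge index theorem). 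Substituting into the estimate for $b_2(S)$ gives a bound of exactly the stated shape.

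The hard part will be the passage from $b_2(S)$ to the precise numerical constants. Noether's inequality applies only after $S$ is known to be minimal of general type, so the verification that $K_S$ is nef is essential rather than cosmetic; and recovering the sharp coefficients $30$ and $4$—rather than the slightly larger values produced by a crude application of Noether's inequality—requires careful bookkeeping of $p_g(S)$, of the irregularity $q(S)$, and of the sign and size of the boundary correction $f^2(-1)^n(n^2+2)^{n-2}D^n$ controlled through Corollary \ref{top}.
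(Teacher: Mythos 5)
Your proposal is essentially the paper's own proof: Angehrn--Siu plus Theorem \ref{Mok} to produce the ample, base-point-free divisor $B=K_X+(n^2+1)(2K_X+D)$ (the paper's $L_1$), a smooth complete-intersection surface $S$ via Bertini, iterated Lefschetz to compare $(1,1)$-cohomology, adjunction to identify $K_S$, the intersection computation driven by $L|_{D_i}\cong\oo_{D_i}$, and Noether's inequality. One step, however, as you set it up will not deliver the stated constants, and you correctly flag it: running Noether through $b_2(S)$ gives $b_2(S)=10-8q(S)+12p_g(S)-K_S^2\le 34+5K_S^2$, i.e.\ $34$ and $5$ rather than $30$ and $4$. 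The fix is to bound $h^{1,1}(S)$ instead of $b_2(S)$: since $\rho(X)\le h^{1,1}(X)$ and the Lefschetz restriction maps are morphisms of Hodge structures, you get $h^{1,1}(X)\le h^{1,1}(S)$, and then $h^{1,1}(S)=b_2(S)-2p_g(S)=10-8q(S)+10p_g(S)-K_S^2\le 10+10\left(\tfrac12 K_S^2+2\right)-K_S^2=30+4K_S^2$. This is exactly what the paper does; the extra $-2p_g(S)$ is where the sharp coefficients come from, so what you call ``the hard part'' is a one-line bookkeeping change, not a genuine obstacle.

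Two smaller points. First, your citation of Corollary \ref{top} for the sign of the boundary term is off: that corollary gives only the upper bound $(-1)^{n-1}D^n\le(3/2)^n(K_X+D)^n$, not nonnegativity. The sign you need, $(-1)^{n-1}D^n>0$ (hence $(-1)^nD^n<0$), comes from the proof of Theorem \ref{3}: $(-1)^{n-1}D^n=D\cdot(2L-D)^{n-1}$ with $2L-D=2K_X+D$ ample by Theorem \ref{Mok} and $D$ effective and nonzero. Granting that, your treatment of the correction term $f^2(-1)^n(n^2+2)^{n-2}D^n$ is in fact \emph{more} careful than the paper's, whose displayed identity $L_2^2\cdot L_1^{n-2}=(1+(n-2)(2n^2+3))^2(2n^2+3)^{n-2}L^n$ silently drops this (negative) term and is really the inequality $\le$, which is all that is used. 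Second, your curve-by-curve verification that $K_S$ is nef is unnecessarily roundabout: writing $K_X+(n-2)B=e\left(K_X+\frac{e-f}{e}D\right)$ with $\frac{e-f}{e}=\frac{(n-2)(n^2+1)}{1+(n-2)(2n^2+3)}>\frac13$ for $n\ge 3$, Theorem \ref{Mok} gives ampleness outright, which is how the paper argues; testing on curves would only yield nefness, though nef and big does suffice for Noether's inequality, so your variant is also sound.
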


\begin{proof}
By Theorem \ref{Mok}, we know that $K_{X}+\alpha D$ is ample for any
$\alpha\in (\frac{1}{3}, 1)$. In particular, $2K_{X}+D$ is an ample
$\zz$-divisor. By Anghern-Siu \cite{Ang}, we know that
$K_{X}+(n^{2}+1)(2K_{X}+D)$ is ample and base point free. Let us
define
\begin{align}\notag
L_{1}=K_{X}+(n^{2}+1)(2K_{X}+D),\quad L_{2}=K_{X}+(n-2)L_{1}.
\end{align}
Note that $L_{2}$ is ample. In fact, by Theorem 1.1 it suffices to
observe that
\bdism
L_{2}=(1+(n-2)+2(n-2)(n^{2}+1))\left(K_{X}+\frac{(n-2)(n^{2}+1)}{1+(n-2)+2(n-2)(n^{2}+1)}D\right)
\edism
with
\begin{align}\notag
\frac{(n-2)(n^{2}+1)}{1+(n-2)+2(n-2)(n^{2}+1)}>\frac{1}{3}
\end{align}
for any $n\geq 3$. Since $L_{1}$ is base point free, by Bertini's
theorem, see page 137 in \cite{Griffiths}, the generic element in
the linear system $|L_{1}|$ is smooth. Let us consider $(n-2)$
distinct generic elements in $|L_{1}|$, say $S_{1}, S_{2}, ...,
S_{n-2}$, and let us denote by $S$ their intersection. $S$ is then a
smooth surface in $X$. By reiterating $(n-2)$-times the adjunction
formula, we obtain that
\begin{align}\notag
L_{2}^{2}\cdot L^{n-2}_{1}=K^{2}_{S},
\end{align}
where $K_{S}$ is ample since $L_{2}$ is ample over $X$. To compute
the intersection number $L_{2}^{2}\cdot L^{n-2}_{1}$ it is
convenient to express $L_{1}$ and $L_{2}$ in terms of the
log-canonical divisor $L=K_{X}+D$. Thus, we have
\begin{align}\notag
L_{1}=(2n^{2}+3)L-(n^{2}+2)D,
\end{align}
and
\begin{align}\notag
L_{2}=(1+(n-2)(2n^{2}+3))L-(1+(n-2)(n^{2}+2))D,
\end{align}
which then implies
\begin{align}\notag
L_{2}^{2}\cdot
L^{n-2}_{1}=(1+(n-2)(2n^{2}+3))^{2}(2n^{2}+3)^{n-2}L^{n}.
\end{align}
By the Lefschetz hyperplane theorem, see page 157 in
\cite{Griffiths}, we know that for $n=3$ then $h^{1, 1}(X)$ injects
into $h^{1, 1}(S)$, while for $n>3$ the restriction map gives an
isomorphism between $h^{1, 1}(X)$ and $h^{1, 1}(S)$. To conclude the
proof, we need an estimate on the Picard number of a surface with
ample canonical divisor in terms of $c^{2}_{1}$. Because of
Noether's inequality, see Theorem 3.1 in  \cite{Bar}, we have
\begin{align}\notag
h^{1, 1}(S)\leq 30+4K^{2}_{S},
\end{align}
so that
\begin{align}\notag
h^{1, 1}(X)\leq 30+4(1+(n-2)(2n^{2}+3))^{2}(2n^{2}+3)^{n-2}L^{n},
\end{align}
which then gives an estimate of the Picard number of $X$ in terms of
the top self intersection of $K_{X}+D$.
\end{proof}

For a two dimensional version of Theorem \ref{above} and much more
the interested reader may refer to \cite{DiCerbo1}.

\end{document}